\newcommand{\C}{{\mathbb C}}
\newcommand{\G}{{\mathcal G}}
\newcommand{\be}{\begin{equation}}
\newcommand{\ee}{\end{equation}} 
\newcommand{\old}[1]{}
\newcommand{\red}[1]{{\color{red} #1}}
\newcommand{\R}{{\mathbb R}}
\newcommand{\T}{{\mathbb T}}
\newcommand{\K}{{\mathbb K}}
\newcommand{\GL}{{\mathrm{GL}}}
\newcommand{\Gr}{\mathrm{Gr}}
\newcommand{\DET}{\mathrm{DET}}
\renewcommand{\Pr}{\text{Pr}}
\newtheorem{theorem}{Theorem}
\newtheorem{thm}[theorem]{Theorem}
\newtheorem{lemma}[theorem]{Lemma}
\newtheorem{prop}[theorem]{Proposition}
\newtheorem{corollary}[theorem]{Corollary}
\newtheorem{question}{Open question}
\title{Multideterminantal measures}
\author{Richard Kenyon\footnote{Department of Mathematics, Yale University, New Haven; richard.kenyon at yale.edu.}}
\begin{document}

\date{}
\maketitle
\abstract{
We define multideterminantal probability measures, a family of probability measures on $[k]^n$ where $[k]=\{1,2,\dots,k\}$,
generalizing determinantal measures (which correspond to the case $k=2$). We give examples coming from 
the positive Grassmannian, from the dimer model and from the spanning tree model. 

We characterize kernels of \emph{pure} $k$-determinantal measures as those arising from $k$-tuples of
Grassmannian elements whose maximal minors have certain sign restrictions. As a special case we construct all kernels of
pure determinantal measures via a pair of elements of $Gr_{n_1,n}$ having corresponding Pl\"ucker coordinates of the 
same signs.

We also define and completely characterize determinantal probability measures on the permutation group
$S_n$. 
}

\section{Multideterminantal measures}

A \emph{determinantal measure} is a probability measure $\mu$ on $\{0,1\}^n$ defined by an $n\times n$ matrix,
the \emph{kernel} $T$. Point probabilities for $\mu$ are determinants of matrices constructed from $T$, and in particular determinants of principal minors of $T$ are probabilities of ``index inclusion'' events: for any subset $S\subset[n]$, 
\be\label{pminors}\Pr(\{x_i=1~\forall i\in S\})=\det(T_S^S).\ee

Determinantal measures were introduced by Macchi in \cite{Macchi}. They occur naturally in a number of settings, such as the edge inclusion probabilities in random spanning trees \cite{BP}, in the planar bipartite dimer model \cite{Kenyon.local}, and even in integer addition \cite{BDF}. Continuous versions include
the fermionic gas \cite{Macchi}, random analytic functions \cite{PV} and random matrix ensembles such as GUE, CUE and the Ginibre
ensembles (see \cite{Soshnikov}). See \cite{Lyons, Borodin} for more background.
Despite their ubiquity, determinantal measures remain mysterious:
even classifying kernels of determinantal measures is an open problem.
  
We study here a generalization of determinantal measures to  
probability measures on $[k]^n$ where $[k]=\{1,2,\dots,k\}$ is a finite set. We refer to $[k]$ as the set of \emph{colors}.
We call these measures \emph{$k$-determinantal measures}. 
In this case instead of a single kernel we have $k$ matrices $A_1,\dots,A_k$ summing to the identity. 
Probabilities of 
individual events are given by determinants of matrices formed from the $A_i$.
Specifically, the probability of the single element $(i_1,\dots,i_n)\in [k]^n$ is 
\be\label{singleform}\Pr((i_1,\dots,i_n))= \det(A^1_{i_1},A^2_{i_2},\dots,A^n_{i_n}),\ee
the determinant of the $n\times n$ matrix whose $j$th column 
is the $j$th column of matrix $A_{i_j}$. (We use superscripts on matrices to denote columns of the matrix.)
Probabilities of marginals like ``coordinates $1$ and $3$ have, respectively, colors $4$ and $2$''
are also given by determinants, see Section \ref{marginals}.
The classical determinantal measure is the case $k=2$ (after translating indices $\{0,1\}\mapsto\{1,2\}$)
where the kernel is the matrix $A_2$, and $A_1=I-A_2$.  

We give naturally occurring examples of $k$-determinantal measures, one family arising from the positive Grassmannian $\Gr_{n,kn}$,
another arising from the dimer model on a bipartite planar graph, and a third arising from the uniform 
spanning tree model (on a general connected graph). See Figure \ref{rgbtriangle} for a random sample from a $3$-determinantal point measure on the vertices in a triangular grid, coming from a spanning tree measure (see Section \ref{STexample} for details.)

\begin{figure}[htbp]
\begin{center}
\includegraphics[width=3in]{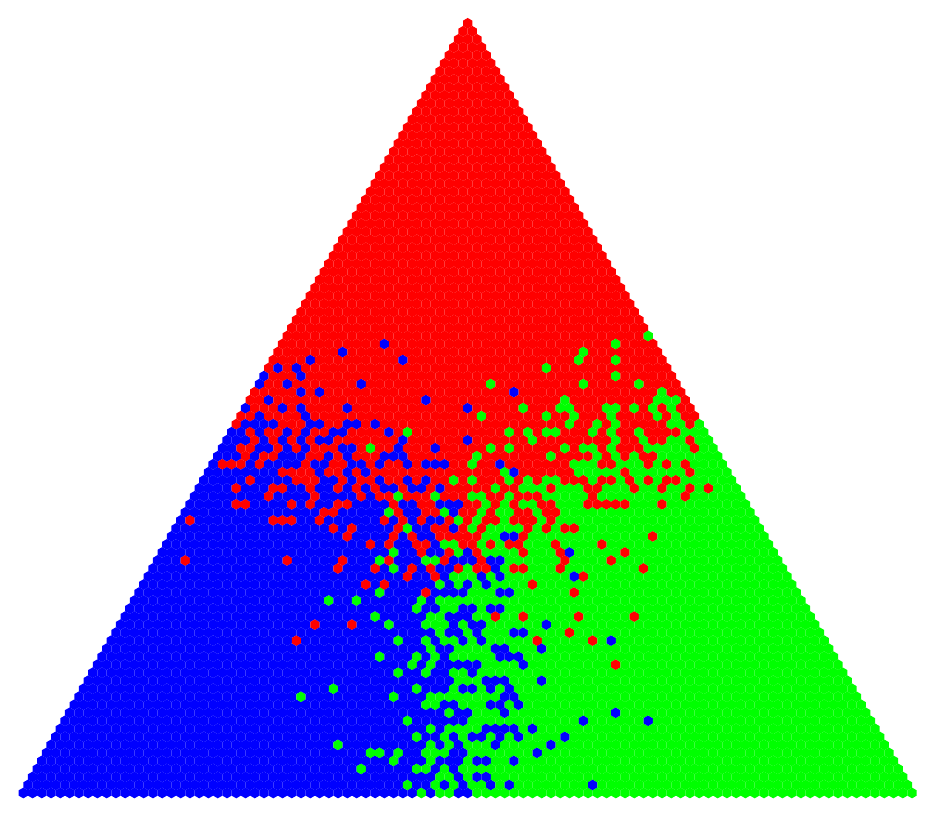}
\caption{\label{rgbtriangle}A $3$-determinantal measure (with $3$ colors, red, blue and green) on the vertices of a large triangular
region in the triangular grid. This example arises from a random spanning tree on the triangular grid with inhomogeneous conductances.}
\end{center}
\end{figure}

We also discuss \emph{symmetric} $k$-determinantal measures, which have the additional property that each 
matrix $A_i$ is symmetric.
For $k=2$ it is known that a symmetric matrix $A_1$ is the kernel of a determinantal measure if and only if all of its 
eigenvalues are in $[0,1]$, see \cite{Macchi}.
For $k\ge3$ we don't have an analogous characterization of matrices defining symmetric $k$-determinantal measures. However an analog of this eigenvalue property for $k\ge3$ relates to 
the \emph{characteristic polynomial} of the measure,
whose zero set is necessarily a \emph{Vinnikov curve} (for $k=3$) and a higher dimensional 
``Vinnikov variety'' for larger $k$, see Section \ref{vinnikovscn}.   

If the $A_i$ are symmetric and commute for $k\ge 3$, we have an analog of the ``sum of Bernoulli's" property 
for determinantal measures (Section \ref{commutingscn}). 

If the $A_i$ have ranks summing to $n$, we call $\mu$ a \emph{pure} $k$-determinantal measure.
In this case the measure is supported on points where the number of occurrences of each color $i$ is fixed (and equal to the rank of $A_i$).
We show how to encode such a measure via a single $n\times n$ matrix $L$, and we can give a concise description of point probabilities in terms of products of minors of $L$, see
Theorem \ref{minors}. This encoding allows us to characterize pure $k$-determinantal measures in terms of 
$k$ Grassmannian elements $L_i\in \Gr_{n_i,n}$ whose maximal minors have certain sign restrictions.
For example for $k=2$ a general pure $2$-determinantal measure is constructed from a pair of elements in the 
Grassmannian $\Gr_{n_1,n}$ having Pl\"ucker coordinates of the same signs. See Section \ref{mnevsection}.

When $k=n$ and each of the $A_i$ is of rank $1$, a $n$-determinantal measure is a probability measure on the group $S_n$ of permutations of $[n]$. We thus naturally construct 
\emph{determinantal random permutations}.
We give a complete classification of such measures, as those 
arising from Pfaffian bipartite graphs: see Theorem \ref{perm}.
Natural examples are given in Section \ref{permscn}.
\bigskip

\noindent{\bf  Acknowledgments.} 
We thank Omer Angel, Persi Diaconis and Nicholas Ovenhouse for discussions, and 
Steven Karp and Donovan Snyder for pointing out errors in an earlier version.
This research was supported by NSF grant DMS-1940932 and the Simons Foundation grant 327929.

\section{Basics}

We collect here a few basic facts about $k$-determinantal measures.

\subsection{Marginals}\label{marginals}

Note that from (\ref{singleform}), using the multilinearity of the determinant and the fact that the sum of the $A_i$ is $I$, the sum of all probabilities is $1$:
\begin{align}
\sum_{x\in[k]^n} \Pr(x) &= \sum_{x\in[k]^n}\det(A^1_{x_1},A^2_{x_2},\dots,A^n_{x_n})\nonumber\\
&=\label{wedge} 
(A_1^1+\dots+A_k^1)\wedge\dots\wedge(A_1^n+\dots+A_k^n)\\
&=e_1\wedge\dots\wedge e_n\nonumber\\
&=1.\nonumber
\end{align}

The marginal probabilities also have a simple form. For example, $Pr(x_i=j) = (A_j)_{ii}$. This follows by
restricting the $i$th term in the wedge product (\ref{wedge}) to be $A_j^i$ instead of the sum $A_1^i+\dots+A_k^i$.
Likewise restricting the $i_1$ term to be $A_{j_1}^{i_1}$ and the $i_2$ term to be $A_{j_2}^{i_2}$
gives
\be\label{marg}\Pr(x_{i_1}=j_1, x_{i_2}=j_2) = \det\begin{pmatrix}(A_{j_1})_{i_1,i_1}&(A_{j_2})_{i_1,i_2}\\(A_{j_1})_{i_2,i_1}&(A_{j_2})_{i_2,i_2}\end{pmatrix},\ee
and a similar expression holds for larger marginals. This illustrates the most useful property of $k$-determinantal measures:
their marginals can be quickly computed even when $n$ is large.

\old{
\subsection{Diagonal conjugation}

If matrices $A_1,\dots,A_k$ determine a $k$-determinantal measure on $[k]^n$, then for any diagonal matrix $S\in\GL_n(\R)$,
the matrices $SA_iS^{-1}$ determine the same measure.  This follows directly from (\ref{singleform}).
}

\subsection{$\GL_n(\R)_+$-Invariance}\label{invce}

If we have $k$ matrices $A_1,\dots,A_k$ for which the quantities in (\ref{singleform}) are all nonnegative
(and not all zero), but for which the sum $A_1+\dots+A_k$ is not necessarily the identity, we can construct a $k$-determinantal
measure by replacing each $A_i$
with $\tilde A_i:=MA_i$ where $M=(A_1+\dots+A_k)^{-1}$; these new matrices $\tilde A_i$ now sum to the identity
and the point probabilities are
\begin{align}\nonumber\Pr((i_1,\dots,i_n))&= \det((MA_{x_1})^1,(MA_{x_2})^2,\dots,(MA_{x_n})^n)\\
&\label{Mfactor}=\det(M(A_{x_1}^1),M(A_{x_2}^2),\dots,M(A_{x_n}^n))\\
&\nonumber= \det M\det(A^1_{x_1},A^2_{x_2},\dots,A^n_{x_n})
\end{align}
which are nonnegative since $\det M>0$. 

We call $A_1,\dots,A_k$ an \emph{unnormalized} $k$-determinantal measure.

As an example when $k=2$ we have the following.
\begin{prop}[\cite{Macchi}]\label{pm}
Matrices $A$ and $I$ form an
unnormalized $2$-determinantal measure
if and only if $A$ is a matrix with nonnegative principal minors.
\end{prop}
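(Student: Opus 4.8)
The plan is to show directly that the point probabilities (\ref{singleform}) for the pair $(A,I)$ are exactly the principal minors of $A$, after which the claim falls out immediately from the definition of an unnormalized $2$-determinantal measure given in Section \ref{invce}.

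First I would fix a configuration $(i_1,\dots,i_n)\in\{1,2\}^n$ and set $S=\{j : i_j=1\}$, the set of coordinates assigned to $A$ (taking color $1$ to correspond to $A$ and color $2$ to $I$; the opposite labeling is symmetric and produces the same set of values). By (\ref{singleform}), the probability of this configuration is $\det M$, where $M$ is the matrix whose $j$th column is the column $A^j$ when $j\in S$ and the standard basis vector $e_j$ when $j\notin S$.

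The key step is to evaluate $\det M$. For each $j\notin S$ the column $e_j$ has its unique nonzero entry, a $1$, in row $j$, so Laplace expansion along that column deletes row $j$ and column $j$ with sign $(-1)^{j+j}=+1$; because each deletion occurs at a matching row/column index, no cumulative sign is introduced. Iterating over all $j\notin S$ removes precisely the rows and columns indexed by $S^c$ and leaves the principal submatrix of $A$ on the index set $S$, so $\det M=\det(A_S^S)$. The only point requiring care here is this sign bookkeeping, which I would state as the standard fact that a matrix agreeing with the identity on a column set $S^c$ has determinant equal to its $S$-principal minor; the computation itself is routine.

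Finally, as $S$ ranges over all subsets of $[n]$ the configurations range over all of $\{1,2\}^n$, and the values $\det(A_S^S)$ range over all principal minors of $A$, including the empty minor $\det(A_\emptyset^\emptyset)=1$ obtained from $S=\emptyset$. By definition, $A$ and $I$ form an unnormalized $2$-determinantal measure exactly when all these quantities are nonnegative and not all zero; the empty minor already guarantees they are not all zero, so the condition reduces precisely to the requirement that every principal minor of $A$ be nonnegative. This establishes both implications simultaneously.
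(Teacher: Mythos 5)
Your proposal is correct and follows essentially the same route as the paper: the paper's proof is the one-line observation that with $A_1=A$, $A_2=I$ each determinant in (\ref{singleform}) is a principal minor of $A$ and all principal minors appear. You have simply written out the Laplace-expansion bookkeeping and the remark that $S=\emptyset$ gives the empty minor $1$, which the paper leaves implicit.
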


\begin{proof}
When $A_1=A$ and $A_2=I$, each determinant (\ref{singleform}) is a principal minor of $A$, and all principal minors appear.
\end{proof}

Thus when $A$ has nonnegative principal minors, the matrix $(I+A)^{-1}A$ is the kernel of a determinantal measure.
Moreover note that if $K$ is the kernel of a determinantal measure, then $K$ has nonnegative principal minors,
since these are marginals, by (\ref{pminors}). However not every kernel $K$ of a determinantal measure arises
from the above construction, that is such a $K$ is not necessarily of the form $(I+A)^{-1}A$ for a matrix $A$ with nonnegative
principal minors: \red{see e.g. \cite{Borodin}.}

\subsection{Characteristic polynomial}

The \emph{characteristic polynomial} $P(x_1,\dots,x_k)\in \R[x_1,\dots,x_k]$ of a $k$-determinantal measure 
is defined to be
$$P(x_1,\dots,x_k) = \det(x_1A_1+x_2A_2+\dots+x_k A_k).$$
It is homogeneous of degree $n$. 

\begin{lemma}We have
$$P(x_1,\dots,x_k) = \sum_{i_1+\dots+i_k=n} C_{i_1,\dots,i_k} x_1^{i_1}\dots x_k^{i_k}$$
where $C_{i_1,\dots,i_k}$ is the probability that, for each $j$, color $j$ occurs exactly $i_j$ times.
\end{lemma}

\begin{proof} This follows from (\ref{singleform}): the coefficient of $x_1^{i_1}\dots x_k^{i_k}$ in $P$ 
corresponds to summing over all determinants of matrices formed from $i_1$ of the columns of $A_1$, $i_2$ of the columns of $A_2$, etc., in some order. This is the probability that, for each $j$, index $j$ occurs $i_j$ times.
\end{proof}

 A $k$-determinantal measure $\mu$ induces a probability measure $\rho=\rho(\mu)$ on $\{0,1,2,\dots\}^k$ obtained by counting
 the number of occurrences of each color. In other words $\rho$ is the measure whose probability generating
 function is $P$: $\Pr_\rho(i_1,\dots,i_k) =  C_{i_1,\dots,i_k}.$

\begin{question}
What polynomials $P$ arise as characteristic polynomials of $k$-determinantal measures?
\end{question}

This is open even for $k=2$.

\subsection{Pure determinantal measures}

A $k$-determinantal measure on $[k]^n$ is \emph{pure} if the sum of the ranks of the $A_i$ is exactly $n$.
Then a point $(x_1,\dots,x_n)\in[k]^n$ has nonzero probability if and only if each index $i$ occurs exactly $n_i:=\text{rank}(A_i)$ times.
In particular the characteristic polynomial $P$ is a monomial: $P=\prod_{i=1}^k x_i^{n_i}$.
In this case moreover we have 
\begin{lemma}\label{pureproj}
For a pure $k$-determinantal measure each $A_i$ is a projection matrix.
\end{lemma}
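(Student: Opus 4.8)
The plan is to show that the rank condition forces the column spaces of the $A_i$ to decompose $\R^n$ as a direct sum, on which each $A_i$ acts as the associated coordinate projection. Write $V_i=\mathrm{Im}(A_i)$ for the column space of $A_i$, so that $\dim V_i=n_i=\text{rank}(A_i)$. Since $A_1+\dots+A_k=I$, every $v\in\R^n$ satisfies $v=\sum_i A_iv$ with each $A_iv\in V_i$; hence $\R^n=V_1+\dots+V_k$. Because the dimensions satisfy $\sum_i\dim V_i=\sum_i n_i=n=\dim\R^n$, and $\dim(V_1+\dots+V_k)\le\sum_i\dim V_i$ always holds with equality precisely when the sum is direct, we conclude $\R^n=V_1\oplus\dots\oplus V_k$. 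This dimension count is the crux of the argument; everything else is bookkeeping around the uniqueness of decomposition it provides.

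Next I would exploit this directness to pin down the action of each $A_j$ on the summands. Fix $j$ and take $w\in V_j$. Applying $I=\sum_i A_i$ gives $w=\sum_i A_iw$, which I rewrite as
\be w-A_jw=\sum_{i\ne j}A_iw.\ee
The left-hand side lies in $V_j$ (as $w\in V_j$ and $A_jw\in V_j$), while the right-hand side lies in $\bigoplus_{i\ne j}V_i$. By directness these subspaces meet only in $0$, so both sides vanish: $A_jw=w$ and $A_iw=0$ for every $i\ne j$. Thus $A_j$ restricts to the identity on $V_j$ and to zero on each $V_i$ with $i\ne j$.

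Finally I would assemble these restrictions. Any $v\in\R^n$ has a unique decomposition $v=\sum_i v_i$ with $v_i\in V_i$, and by the previous step $A_jv=\sum_i A_jv_i=v_j$. Hence $A_j$ is exactly the projection onto $V_j$ along $\bigoplus_{i\ne j}V_i$, and in particular $A_j^2v=A_jv_j=v_j=A_jv$, so $A_j^2=A_j$. As this holds for each $j$, every $A_i$ is a projection. I expect the only genuinely substantive point to be the passage from ``the images span'' together with the rank hypothesis to the direct-sum decomposition; once that is in hand the idempotence follows immediately.
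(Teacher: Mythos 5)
Your proof is correct and rests on the same key observation as the paper's: the rank hypothesis together with $\sum_i A_i=I$ forces the images $V_i$ to decompose $\R^n$ as a direct sum. The paper finishes more tersely --- setting $B_i=\sum_{j\ne i}A_j$, noting that $A_iB_i=B_iA_i$ has image inside $V_i\cap\bigl(\sum_{j\ne i}V_j\bigr)=\{0\}$, and multiplying $A_i+B_i=I$ by $A_i$ --- whereas your explicit analysis of the action on each summand additionally yields that $A_i$ is precisely the projection onto $V_i$ along the span of the other $V_j$'s, a fact the paper states separately at the start of Section~\ref{projscn}.
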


\begin{proof}Define $B_i=\sum_{j\ne i}A_j$ so that $A_i+B_i=I$. Then $A_iB_i=B_iA_i$. But the images of $A_i$ and $B_i$ are complementary subspaces, so $A_iB_i=0$. Multiplying $A_i+B_i=I$ by $A_i$ on both sides we see that $A_i^2=A_i$. 
\end{proof}

Section \ref{projscn} contains more information about pure $k$-determinantal measures.

\subsection{Subdeterminantal measures}
Note that if $\mu$ is a $k$-determinantal measure and $S\subset[n]$ then $\mu$ restricted to $[k]^S$ is again $k$-determinantal,
with matrices $(A_i)_{S}^S$, the submatrices of $A_i$ with rows and columns indexed by $S$. This follows from the marginals property
generalizing (\ref{marg}).

\subsection{Forgetful maps}\label{forget}

Given a $k$-determinantal measure with matrices $A_1,\dots,A_k$, and a surjective map $\phi:[k]\to[\ell]$,
we can define an $\ell$-determinantal measure $B_1,\dots,B_\ell$ where $B_j=\sum_{i~:~\phi(i)=j}A_i$.
It is the image of $\mu$ under the map $[k]^n\to[l]^n$ induced by $\phi$. 
The fact that the image is a determinantal
measure follows from (\ref{singleform}) and the multilinearity. 

As an example, if $\ell=2$ and $\phi(j)=1$ for all $j$ except $j_0$, and $\phi(j_0)=2$,
then the image of $\mu$ is a determinantal measure.
This shows that each $A_j$,
and more generally the sum of any subset of $A_j$s, is the kernel of a usual determinantal measure.

\section{Natural examples}

\subsection{Grassmannian examples}

The Grassmannian $\Gr_{n,N}(\R)$ is the space of $n$-planes in $\R^{N}$. 
It can be presented as the space of $n\times N$ real matrices of rank $n$, modulo action on the left by $\GL_n(\R)$. 

The totally nonnegative Grassmannian $\Gr_{n,N}^{\ge}$ is the subset of $\Gr_{n,N}$ whose Pl\"ucker coordinates ($n\times n$ minors) 
are nonnegative.
Likewise define the totally positive Grassmannian $\Gr_{n,N}^{+}$ to be the subset of $\Gr_{n,N}$ whose Pl\"ucker coordinates are positive. 

Suppose $N=kn$ for some integer $k$. Given an element $G\in\Gr_{n,kn}^{\ge}$, represented as an $n\times kn$ matrix,
for $i\in[k]$ let $A_i$ be the $n\times n$ matrix formed from the columns $i,i+k,\dots,i+(n-1)k$ of $G$. 
The $k$-tuple $(A_1,\dots,A_k)$ forms an unnormalized $k$-determinantal measure (see Section \ref{invce}):
Let $M=\sum_{i=1}^k A_i$, and suppose $M$ is invertible. Define
$A'_i:=M^{-1}A_i$.
Note that the $A'_i$ are determined by and uniquely determine $G$, since $M^{-1}G=G$ as elements of  $\Gr_{n,kn}$. 
The $k$-tuple $(A'_1,\dots,A'_k)$ forms a $k$-determinantal measure.

We define
$\DET_{n,kn}\subset \Gr_{n,kn}$ to be the subset consisting of $k$-determinantal measures,
that is, the set of matrices $G\in  \Gr_{n,kn}$ such that forming the $A'_i$ as above results in a $k$-determinantal measure.
Note that $\Gr_{n,kn}^{\ge}\subset \DET_{n,kn}\subset \Gr_{n,kn}$.

Since elements of $\Gr_{n,kn}^{\ge}$ have a nice parameterization in terms of planar networks \cite{postnikov_06}, they provide
a tractable subclass of $k$-determinantal measures.

See Section \ref{projscn} for a different construction, of a pure $k$-determinantal measure from a set of $k$ Grassmannian elements
$L_i\in\Gr_{n_i,n}$ for $i=1,\dots,k$,
subject to certain sign restrictions.

\subsection{Dimer examples}

Let $\G=(B\cup W,E)$ be a bipartite planar graph having a dimer cover (a perfect matching). Let $\nu:E\to\R_{>0}$ be a positive edge weight function. 

Let $K$ be a Kasteleyn matrix for $\G$: this is a matrix with rows indexing the white vertices and columns indexing the black vertices,
with entries $$K(w,b) = \begin{cases}\pm \nu_e&w\sim b\\0&\text{else.}\end{cases}$$
where the signs are chosen according to the Kasteleyn rule \cite{Kast}: a face of length $\ell$ has $\frac{\ell}2+1\bmod2$ minus signs. 
Kasteleyn proved that in this setting $|\det K|$ is the weighted sum of dimer covers of $\G$, where the weight of a dimer cover
is the product of its edge weights. See \cite{Kenyon.lectures} for more information on dimers.

For edges $e=wb$ and $e'=w'b'$, let 
$$\K(e,e') = K(w,b)K^{-1}(b,w').$$
Note that the right-hand side does not depend on $b'$. 
By \cite{Kenyon.local}, $\K$ is the kernel of a determinantal measure $\tau$ on $\{0,1\}^E$. This is the inclusion
measure for dimers on edges of $\G$: for a point $(x_1,\dots,x_E)\in \{0,1\}^E$, the quantity
$\tau((x_1,\dots,x_E))$ is the probability that a random dimer cover of $\G$
covers exactly the edges $e$ for which $x_e=1$. 

Define for each $i\in[k]$ and each edge $e\in E$ a nonnegative edge weight $\nu_i(e)$, satisfying $\nu(e)=\sum_i \nu_i(e)$.
For a random dimer cover using edge $e$, color edge $e$ with color $i$ with probability proportional to $\nu_i(e)$,
independently for all edges, that is, with probability $p_i(e) = \frac{\nu_i(e)}{\nu_1(e)+\dots+\nu_k(e)}$.

Now consider the induced measure $\mu$ on $[k]^W$ which records the color of the dimer connected to $w$. 
We compute 
$$\Pr_\mu(\text{$w$ has color $i$}) = \sum_{b\sim w} p_i(bw)\Pr_\tau(bw) = \sum_{b\sim w} p_i(bw)K(w,b)K^{-1}(b,w).$$

Likewise for two vertices
$$\Pr_\mu(\text{$w_1$ has color $i_1$ and $w_2$ has color $i_2$})= 
\sum_{b_1\sim w_1}\sum_{b_2\sim w_2} p_{i_1}(b_1w_1)p_{i_2}(b_2w_2)\Pr_\tau(b_1w_1,b_2w_2)$$
where $b_1$ runs over neighbors of $w_1$ and $b_2$ runs over 
neighbors of $w_2$
\begin{align*}
& = \sum_{b_1}\sum_{b_2}p_{i_1}(b_1w_1)p_{i_2}(b_2w_2)\det\begin{pmatrix} K(w_1,b_1)K^{-1}(b_1,w_1)&K(w_2,b_2)K^{-1}(b_2,w_1)
\\K(w_1,b_1)K^{-1}(b_1,w_2)&K(w_2,b_2)K^{-1}(b_2,w_2)\end{pmatrix}\\
&=\det\begin{pmatrix} \sum_{b_1} p_{i_1}(b_1w_1)K(w_1,b_1)K^{-1}(b_1,w_1)&\sum_{b_2}p_{i_2}(b_2w_2)K(w_2,b_2)K^{-1}(b_2,w_1)\\
\sum_{b_1} p_{i_1}(b_1w_1)K(w_1,b_1)K^{-1}(b_1,w_2)&\sum_{b_2}p_{i_2}(b_2w_2)K(w_2,b_2)K^{-1}(b_2,w_2)\end{pmatrix}\\
&=\det\begin{pmatrix} (A_{i_1})_{w_1,w_1}&(A_{i_2})_{w_1,w_2}\\
(A_{i_1})_{w_2,w_1}&(A_{i_2})_{w_2,w_2}\end{pmatrix}
\end{align*}
where $A_i$ is the matrix with
$$(A_i)_{w_2,w_1}:=\sum_{b_1\sim w_1} p_i(b_1w_1)K(w_1,b_1)K^{-1}(b_1,w_2)=(K_iK^{-1})^t$$
where $K_i$ is the scaled Kasteleyn matrix with entries $K_i(w,b) = p_i(bw)K(w,b)$.

A similar argument works for all marginals, so the matrices $A_1,\dots, A_k$ form a $k$-determinantal measure on white vertices.
This proves:
\begin{thm}
The induced measure on $[k]^W$ is $k$-determinantal.
\end{thm}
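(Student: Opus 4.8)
The plan is to generalize the two-vertex computation already displayed to an arbitrary marginal over white vertices $w_1,\dots,w_m$ and colors $i_1,\dots,i_m$, and to show that the resulting expression is exactly the $k$-determinantal marginal built from the matrices $A_i=(K_iK^{-1})^t$. Specializing to $m=|W|=n$ then recovers the point-probability form (\ref{singleform}), which is what it means for $\mu$ to be $k$-determinantal. First I would record that the $A_i$ sum to the identity: since $\sum_i p_i(e)=1$ we have $\sum_i K_i = K$, hence $\sum_i K_iK^{-1} = I$ and $\sum_i A_i = I$, as required of a $k$-determinantal family.

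Next I would set up the general marginal. Conditioning on the underlying dimer cover and then coloring each occupied edge independently, the event that $w_a$ carries color $i_a$ for $a=1,\dots,m$ has probability $\E_\tau[\prod_a p_{i_a}(e(w_a))]$, where $e(w_a)$ is the edge covering $w_a$. Expanding over the black neighbor $b_a$ matched to each $w_a$ gives
\[
\Pr_\mu(x_{w_1}=i_1,\dots,x_{w_m}=i_m)=\sum_{b_1,\dots,b_m}\Big(\prod_{a=1}^m p_{i_a}(b_a w_a)\Big)\,\Pr_\tau(b_1w_1,\dots,b_mw_m),
\]
exactly as in the displayed $m=2$ case. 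The determinantal structure of $\tau$ with kernel $\K$ expresses the inclusion probability as $\Pr_\tau(b_1w_1,\dots,b_mw_m)=\det[\K(b_aw_a,b_cw_c)]_{a,c}=\det[K(w_a,b_a)K^{-1}(b_a,w_c)]_{a,c}$.

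The key structural point is that the black vertex $b_a$ enters this determinant only in row $a$ (through the factor $K(w_a,b_a)$ and the entries $K^{-1}(b_a,w_c)$), so multilinearity of the determinant in its rows lets me carry the weight $p_{i_a}(b_aw_a)$ and the sum over $b_a$ into row $a$ independently of the other rows. Each such row sum collapses to
\[
\sum_{b_a} p_{i_a}(b_aw_a)\,K(w_a,b_a)\,K^{-1}(b_a,w_c)=\sum_{b_a}K_{i_a}(w_a,b_a)K^{-1}(b_a,w_c)=(K_{i_a}K^{-1})(w_a,w_c),
\]
using $K_i(w,b)=p_i(bw)K(w,b)$. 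One subtlety that makes the unconstrained sum over the $b_a$ legitimate is that terms in which two black vertices coincide contribute nothing: if $b_a=b_{a'}$ then rows $a$ and $a'$ of the determinant are proportional, so the inclusion probability already vanishes on non-matchings, and no distinctness constraint needs to be imposed by hand.

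Finally I would identify the result with the $k$-determinantal marginal. After the row sums the probability equals $\det[(K_{i_a}K^{-1})(w_a,w_c)]_{a,c}$, and since $(A_i)_{xy}=(K_iK^{-1})(y,x)$ by definition of $A_i=(K_iK^{-1})^t$, transposing gives $\det[(A_{i_c})_{w_a,w_c}]_{a,c}$, which is precisely the $k$-determinantal marginal of (\ref{marg}) and its higher analogues. Hence every marginal of $\mu$ has the required determinantal form; taking $m=n$ gives (\ref{singleform}), so $\mu$ is the $k$-determinantal measure with matrices $A_1,\dots,A_k$. The main thing to get right is the bookkeeping in the multilinearity step — matching the row/column indices and color labels through the transpose — rather than any hard estimate; there is no genuine analytic obstacle, only the need to verify that the structural factorization (each $b_a$ confined to row $a$, repeated $b$'s giving proportional rows) persists for all $m$.
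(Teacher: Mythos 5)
Your proof is correct and follows essentially the same route as the paper: expand the marginal over the black neighbors matched to each white vertex, use multilinearity of the determinantal kernel $\K$ in its rows to absorb the weights $p_{i_a}$ and the sums over $b_a$, and identify the result with $\det[(A_{i_c})_{w_a,w_c}]$ where $A_i=(K_iK^{-1})^t$. The paper carries this out explicitly only for one and two vertices and asserts the general case; you have simply written out the general marginal, together with the (correct, and implicitly used by the paper) observation that coincident black vertices give proportional rows so the unconstrained sum is harmless.
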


For a simple example,  color the black vertices of $\G$ with arbitrary colors in $[k]$,
and assign edges adjacent to them with full weight of that color (other colors give that edge weight zero). For the  graph of Figure \ref{2by3} with edge weights $1$
\begin{figure}[htbp]
\begin{center}
\includegraphics[width=1.3in]{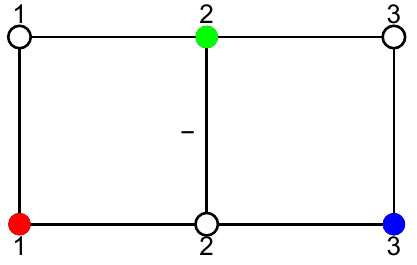}
\caption{\label{2by3}A small bipartite graph with Kasteleyn signs (edges without labels have sign $+$) and colored ``black'' vertices.}
\end{center}
\end{figure}
we have a Kasteleyn matrix
$$K=\begin{pmatrix}1&1&0\\1&-1&1\\0&1&1\end{pmatrix}.$$
With the indicated coloring of black vertices this leads to 
$$K_r=\begin{pmatrix}1&0&0\\1&0&0\\0&0&0\end{pmatrix} = K\begin{pmatrix}1&0&0\\0&0&0\\0&0&0\end{pmatrix}$$
and similarly for $K_g,K_b$, giving
\begin{align*}
A_r &= \left[K\begin{pmatrix}1&0&0\\0&0&0\\0&0&0\end{pmatrix}K^{-1}\right]^t = 
\begin{pmatrix}\frac23&\frac23&0\\\frac13&\frac13&0\\-\frac13&-\frac13&0\end{pmatrix}\\
A_g &= \left[K\begin{pmatrix}0&0&0\\0&1&0\\0&0&0\end{pmatrix}K^{-1}\right]^t= 
\begin{pmatrix}\frac13&-\frac13&\frac13\\-\frac13&\frac13&-\frac13\\\frac13&-\frac13&\frac13\end{pmatrix}\\
A_b &= \left[K\begin{pmatrix}0&0&0\\0&0&0\\0&0&1\end{pmatrix}K^{-1}\right]^t
=\begin{pmatrix}0&-\frac13&-\frac13\\0&\frac13&\frac13\\0&\frac23&\frac23\end{pmatrix}.
\end{align*}

The probability that white vertices $1,2,3$ are colored $r,b,g$ respectively is then
$$\Pr_\mu((r,b,g)) = \det(A_r^1,A_b^2,A_g^3) = \det\begin{pmatrix}\frac23&-\frac13&\frac13\\\frac13&\frac13&-\frac13\\-\frac13&\frac23&\frac13\end{pmatrix}=\frac13$$
and indeed, of the three dimer covers of $\G$, exactly one of them induces this coloring of the white vertices.

\subsection{Spanning tree examples}\label{STexample}

Let $\G=(V,E)$ be an arbitrary connected graph, with $V=\{v_0,v_1,\dots,v_{n-1}\}$ where $v_0$ is a distinguished root vertex.
Let $V'=V\setminus\{v_0\}$ the non-root vertices.
Let $c:E\to\R_{>0}$ be a nonnegative ``conductance'' function on edges. Let $\Delta:\R^V\to\R^V$ be the corresponding Laplacian:
$$(\Delta f)(v) = \sum_{u\sim v}c_{uv}(f(v)-f(u)).$$

We define the reduced Laplacian $\Delta':\R^{V'}\to\R^{V'}$ similarly:
$$(\Delta' f)(v) = \sum_{u\sim v}c_{uv}(f(v)-f(u))$$
where the sum is over $u\in V$, and $f(v_0)=0$ by definition.
In terms of matrices in the standard basis indexed by vertices, the matrix of the reduced 
Laplacian is obtained from the matrix of the Laplacian
by removing row and column $v_0$. The reduced Laplacian is invertible; its determinant is the weighted sum of spanning trees of $\G$,
where the weight of a tree is the product of its edge conductances \cite{Kirchhoff}.

For each edge $e\in E$ and each $i\in[k]$ define $c_i(e)\ge0$ such that $\sum_{i=1}^k c_i(e)=c(e)$.
Let $\Delta'_i$ be the Laplacian of $\G$, rooted at $v_0$, with conductances $c_i$.
Note that $\sum\Delta'_i=\Delta'$.
Let 
$$A_i:=(\Delta')^{-1/2}\Delta'_i(\Delta')^{-1/2}.$$ 
Then $\sum A_i=I$.

\begin{thm}
The matrices $A_i$ define a $k$-determinantal measure on $V\setminus\{v_0\}$.
\end{thm}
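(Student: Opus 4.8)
The identity $\sum_i A_i=I$ holds by hypothesis, and granting this the point probabilities (\ref{singleform}) automatically sum to $1$ by the computation (\ref{wedge}). So the only thing to verify is that every point probability $\det(A^1_{i_1},\dots,A^n_{i_n})$ is nonnegative.

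The plan is to make the graph structure explicit through the reduced signed incidence matrix. Let $\partial$ be the $n\times|E|$ matrix whose $(v,e)$ entry is $\pm1$ according to a chosen orientation of $e$ (and $0$ if $v\notin e$), with the root row deleted, and set $C=\mathrm{diag}(c(e))$, $C_i=\mathrm{diag}(c_i(e))$, so that $\Delta'=\partial C\partial^t$ and $\Delta'_i=\partial C_i\partial^t$. Writing $F=\partial^t(\Delta')^{-1/2}$ (an $|E|\times n$ matrix) gives the clean factorization $A_i=(\Delta')^{-1/2}\partial C_i\partial^t(\Delta')^{-1/2}=F^tC_iF$; in particular each $A_i$ is positive semidefinite, and $\sum_iA_i=F^tCF=I$ confirms the normalization.

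First I would reduce each point probability to a sum over edge-sets. The matrix with columns $A^j_{i_j}=F^tC_{i_j}Fe_j$ factors as $F^tG$, where $G$ is the $|E|\times n$ matrix whose $j$th column is $C_{i_j}Fe_j$, and Cauchy--Binet gives
\be
\det(A^1_{i_1},\dots,A^n_{i_n})=\sum_{I}\det(F_I)\,\det(G_I),
\ee
where $I$ ranges over $n$-element subsets of $E$ and $F_I,G_I$ are the corresponding $n\times n$ row-submatrices. Since $F_I=(\partial_I)^t(\Delta')^{-1/2}$, with $\partial_I$ the submatrix of $\partial$ on the columns $I$, total unimodularity of the incidence matrix yields $\det(F_I)=\varepsilon_I\det\!\big((\Delta')^{-1/2}\big)$ with $\varepsilon_I=\det\partial_I\in\{0,\pm1\}$, nonzero exactly when $I$ is a spanning tree. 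Thus only spanning trees survive, and the task becomes showing that the signed sum $\sum_{I\ \mathrm{tree}}\varepsilon_I\det(G_I)$ is nonnegative.

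The main obstacle is precisely this sign cancellation. Expanding $\det(G_I)$ over bijections between $I$ and the coordinate set reorganizes the whole expression as a signed sum over pairs (spanning tree, assignment of a distinct tree-edge to each vertex), weighted by products of the color conductances $c_{i_j}(e)$ and entries of $F$; the hard part will be to show the tree-signs $\varepsilon_I$ cancel against the permutation signs, leaving a manifestly nonnegative combination of squared incidence minors times nonnegative conductance products. I expect this to follow from an edge-colored refinement of the all-minors (transfer-current) matrix--tree theorem, which I would isolate as a separate lemma. It is essential to use the sign-coherence special to graph-incidence minors rather than general linear algebra: mere positive semidefiniteness of the $A_i$ with $\sum_iA_i=I$ does not force nonnegativity once $k\ge3$ (one can produce such families, arising even as diagonal compressions of a genuine orthogonal projection, with a negative point probability), and indeed the symmetric normalization here gives a genuinely different measure from the naive ``color a random spanning tree'' process, so no elementary probabilistic model can be substituted for the sign analysis.
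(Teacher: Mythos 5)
Your proposal is not yet a proof: everything hinges on the step you defer to an ``edge-colored refinement of the all-minors matrix--tree theorem,'' which you conjecture rather than prove. The preparatory reductions are correct --- the factorization $A_i=F^tC_iF$ with $F=\partial^t(\Delta')^{-1/2}$, the Cauchy--Binet expansion $\det(A^1_{i_1},\dots,A^n_{i_n})=\sum_I\det(F_I)\det(G_I)$, and the restriction to spanning trees via $\det(\partial_I)\in\{0,\pm1\}$ --- but none of this rules out negativity; as you yourself note (correctly: there are $2\times2$ positive semidefinite triples $A_1,A_2,A_3$ summing to $I$ with $\det(A_1^1,A_2^2)<0$), the general linear-algebraic structure is insufficient for $k\ge3$, so the unproven sign-cancellation lemma is exactly where the theorem lives. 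Until you establish that $\sum_{I\ \mathrm{tree}}\varepsilon_I\det(G_I)\ge0$, you have reformulated the problem, not solved it.

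That said, your closing observation is sharp and identifies a genuine issue with the paper's own argument. The paper proves nonnegativity by identifying $\det(A^1_{\sigma(1)},\dots,A^{n-1}_{\sigma(n-1)})$ with $\det(\Delta'^{\,1}_{\sigma(1)},\dots,\Delta'^{\,n-1}_{\sigma(n-1)})/\det\Delta'$, i.e.\ with the probability of the coloring $\sigma$ under the parallel-edge spanning tree model (via the Directed Matrix Tree Theorem). That identification is valid for the one-sided normalization $\tilde A_i=(\Delta')^{-1}\Delta'_i$, since a common left factor pulls out of the column determinant as in Section \ref{invce}; it fails for the symmetric sandwich $A_i=(\Delta')^{-1/2}\Delta'_i(\Delta')^{-1/2}$, because conjugation does not preserve the point probabilities (\ref{singleform}). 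Concretely, for the path on $v_0,v_1,v_2$ with unit conductances and $c_1(v_0v_1)=1$, $c_1(v_1v_2)=0$, one has $(\Delta')^{-1/2}=\tfrac{1}{\sqrt5}\bigl(\begin{smallmatrix}2&1\\1&3\end{smallmatrix}\bigr)$ and $(A_1)_{11}=4/5$, whereas the colored-tree measure gives $\Pr(v_1\text{ has color }1)=1$: the two measures genuinely differ, exactly as you claim. So the paper's one-line conclusion needs the same repair that your missing lemma would provide (or else the theorem should be read with $A_i=(\Delta')^{-1}\Delta'_i$, for which the paper's probabilistic argument is complete and much shorter than your Cauchy--Binet route). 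Your diagnosis is better than your cure; supply the lemma.
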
  

\begin{proof}
Replace each edge of $\G$ with $k$ parallel edges with the same endpoints, one of each color, with the $i$th edge having conductance
$c_i(e)$. Let $\tilde\G$ be the new graph. There is a map from spanning trees of $\tilde\G$ to vertex colorings defined as follows.
Given a spanning tree $T$ of $\tilde\G$ rooted at $v_0$, color each vertex $v\ne v_0$ according to the color of the first edge on the unique path in $T$ from $v$ to the root. 

This mapping defines a measure $\mu$ on $[k]^{V'}$; it is the image of the weighted spanning tree measure on $\tilde\G$. 
We claim that $\mu$ is $k$-determinantal, with matrices $A_i$. 

For any coloring $\sigma\in [k]^{V'}$, the Directed Matrix Tree Theorem (see e.g. \cite{Chaiken}) says that the determinant
$$\det(\Delta_{\sigma(1)}^1,\dots,\Delta_{\sigma(n-1)}^{n-1})$$ 
of the matrix formed from the corresponding columns of the $\Delta_i$, is the weighted sum of spanning trees of $\tilde\G$ 
in which for each vertex $v$, the edge out of vertex $v$ (and in the direction of the unique path to $v_0$) has 
color $\sigma(v)$. 

The sum over all trees is $\det\Delta$, so 
$$\det(A_{\sigma(1)}^1,\dots,A_{\sigma(n-1)}^{n-1})$$
is the probability of coloring $\sigma$, as desired.
\end{proof}

An example is shown in Figure \ref{rgbtriangle} (for the point process) and Figure \ref{rgbtriangletree} (for the spanning tree).
We took a large triangle in the triangular grid, centered at the origin, with wired boundary conditions, and conductances which depend on position and orientation of edges: for a fixed parameter $q>0$, 
horizontal edges at coordinate $(x,y)$ have conductance $q^y$,
edges of slope $\sqrt{3}$ have conductances $q^{\frac{x\sqrt{3}}2-\frac{y}2}$, and 
edges of slope $-\sqrt{3}$ have conductances $q^{-\frac{x\sqrt{3}}2-\frac{y}2}$. Such a spanning tree can be sampled quickly using Wilson's algorithm \cite{Wilson}. 
Each vertex is then colored according to the direction of its outgoing edge (edge in the direction of the path to the wired outer boundary).
\begin{figure}[htbp]
\begin{center}
\includegraphics[width=4.5in]{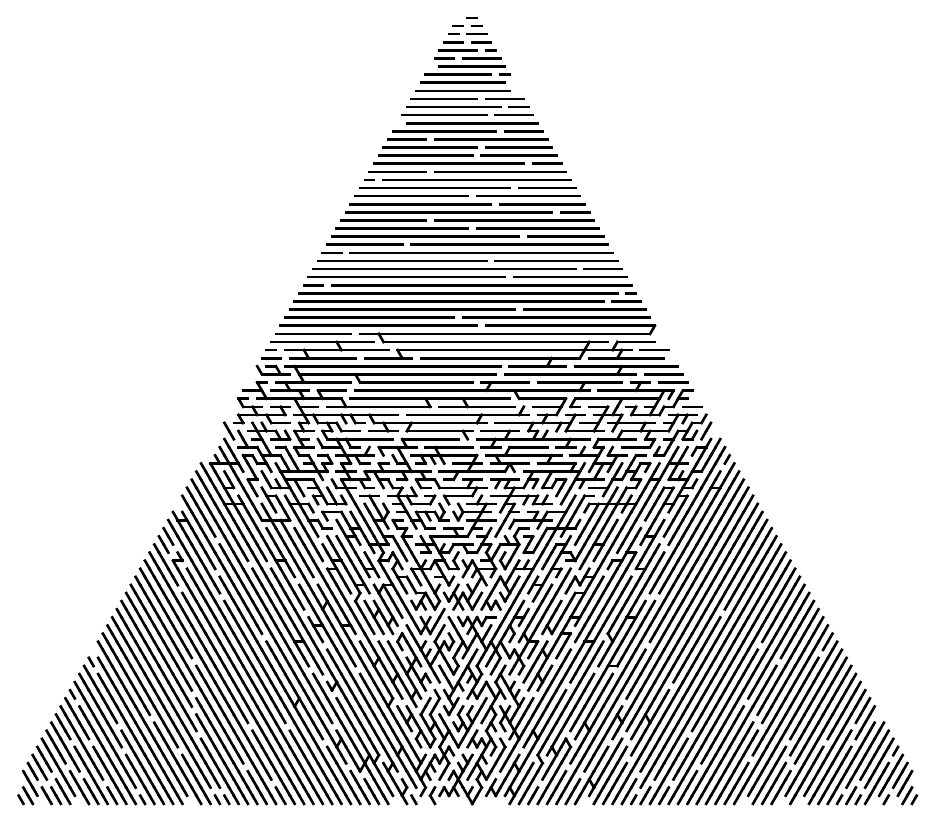}
\caption{\label{rgbtriangletree}A spanning tree with wired boundary and nonuniform conductances.}
\end{center}
\end{figure}

\old{
\subsubsection{Edges}

Let $\G$ be an arbitrary connected graph and $c:E\to\R_{>0}$ be a nonnegative conductance function.
Let $v_0$ be a fixed base point vertex. 
We define $\Delta=dCd^t$ to be the laplacian operator on $\G$ with edge conductances $c$ 
and boundary at $v_0$. Here $d:E\to V\setminus\{v_0\}$ is the incidence matrix and $C$ is the diagonal matrix of conductances $c(e)$. 
The determinant $\det\Delta$ is the weighted sum of spanning trees of $\G$, where a tree has weight given by the product of its
edge conductance \cite{Kirchhoff}.

The transfer current matrix $T$ is defined by $T = Cd^t\Delta^{-1}d$. 
This matrix is the kernel of the determinantal measure $\tau$ on edges, weighting a spanning tree by the product of its conductances \cite{BP}.
We think of this as a $2$-determinantal measure with two colors $0,1$ where $0$ represents an edge not in the tree.

We generalize this to a $k$-determinantal measure on edges, as follows. 
For $i\in[k-1]$ and for each edge $e$ let $c_i(e)\ge0$ such that $\sum_{i=1}^{k-1}c_i(e)=c(e)$.
We reserve the color $0$ to mean ``not in the tree''; there are $k$ colors in total when we include $0$.
For $i\in[k-1]$ let $\Delta_i$ be the laplacian operators on $\G$ (with boundary at $v_0$)
with conductance functions $c_i$ respectively: $\Delta_i = dC_id^t$ and $C_i$ is the diagonal matrix of $c_i(e)$'s.
Explicitly, $\Delta_i$ is the operator on $\R^{V\setminus\{v_0\}}$ defined by
$$\Delta_if(v) = \sum_{u\sim v}c_i(uv)(f(v)-f(u)).$$

We define $T_i = C_id^t\Delta^{-1}d$. Note that $\sum_{i=1}^{k-1} T_i= T$. 
We also define $T_0=I-\sum_{i=1}^{k-1}T_i= I-T$. 

\begin{thm}
The $\{T_i\}_{i=0,\dots,k-1}$ form a $k$-determinantal measure on the edges of $\G$. 
\end{thm}

\begin{proof}
For $i>0$ we have $\Pr(\text{$e$ has color $i$}) = c_i(e)\Pr_\tau(e) = T_i(e,e)$. 
For two edges $e_1$ and $e_2$, we have
\begin{align*}\Pr(\text{$e_1$ has color $i_1$, $e_2$ has color $i_2$}) &= p_{i_1}(e_1) p_{i_2}(e_2)\Pr_\tau(e_1,e_2\in T)\\
&=p_{i_1}(e_1) p_{i_2}(e_2)\det\begin{pmatrix}T(e_1,e_1)&T(e_2,e_1)\\T(e_1,e_2)&T(e_2,e_2)\end{pmatrix}\\
&=\det\begin{pmatrix}p_{i_1}(e_1) T(e_1,e_1)&p_{i_2}(e_2)T(e_2,e_1)\\p_{i_1}(e_1) T(e_1,e_2)&p_{i_2}(e_2)T(e_2,e_2)\end{pmatrix}\\
&=\det\begin{pmatrix}T_{i_1}(e_1,e_1)&T_{i_2}(e_2,e_1)\\T_{i_1}(e_1,e_2)&T_{i_2}(e_2,e_2)\end{pmatrix}.
\end{align*}
A similar expression holds for any set of edges, including edges of color $0$, that is, edges not in the tree.
\end{proof}
}

\section{Symmetric $k$-determinantal measures}

A $k$-determinantal measure is \emph{symmetric} if each $A_i$ is a symmetric matrix. 

\old{
As an example for $k=2$, let $M$ be a symmetric $n\times n$ matrix with nonnegative principal minors $M_S^S$ for any $S\subset[n]$.
Then $M$ and $I$ form a symmetric, unnormalized $2$-determinantal measure.
That is, letting $A=(I+M)^{-1}M,$ we have that
$A$ and $I-A$ define a symmetric $2$-determinantal measure.
To illustrate, for example when $n=5$
$$\Pr(1,0,0,1,1) = \frac{\det(M_1,I_2,I_3,M_4,M_5)}{\det(I+M)} = \frac{M_{\{1,4,5\}}^{\{1,4,5\}}}{\det(I+M)}$$
and likewise for other point probabilities.}

If $B_1,\dots,B_k$ are any real symmetric matrices, then for $\lambda>0$ large enough,
$A_i:=B_i+\lambda I$ will form an unnormalized symmetric $k$-determinantal measure.

Another example is provided by the spanning tree example of Section \ref{STexample}.

\old{
As a final example let $\G$ be a balanced planar bipartite graph with black vertices colored by $[k]$. Let a group $H$ act by color-preserving 
automorphisms of $\G$ and let $X$ be an orbit of white vertices. Let $k=|X|$. 
Then our construction above in the dimer section gives a symmetric $k$-determinantal measure on $X$. 
}

\subsection{Characteristic polynomial}\label{vinnikovscn}

When the $A_i$ are symmetric, the characteristic polynomial $P$ has some additional properties.
For $\vec u\in\R^{k-1}$, and $t\in\R$, along any line $t\mapsto (1,t u_2,\dots,t u_{k})$, the polynomial $P=P(t)$ as a function of $t$ has all real roots.
This follows from the definition of $P$, since (letting $B=u_2A_2+\dots+u_kA_k$) the one-variable polynomial
$P(t) = \det (A_1+t B) = \det B\det(B^{-1}A_1+tI)$ is a multiple of the characteristic polynomial of the symmetric matrix $B^{-1}A_1$,
which has real eigenvalues. Moreover since $A_1,B\ge0$ the roots of $P(t)$ are negative.

In particular in the case $k=2$, $P(x_1,x_2)$ factors as 
$$P=c\prod_{i=1}^k(x_1+a_ix_2)$$ for constants $c,a_i\ge 0$.

In the case $k=3$, $P$ defines a Vinnikov curve \cite{Vinnikov}, which is by definition the zero set of a polynomial $P(x,y) = \det (A+Bx+Cy)$
where $A,B,C$ are symmetric and positive (semi)definite. A Vinnikov curve is a ``generalized hyperbola'', see Figure \ref{vinnikov}.
\begin{figure}[htbp]
\begin{center}
\includegraphics[width=3in]{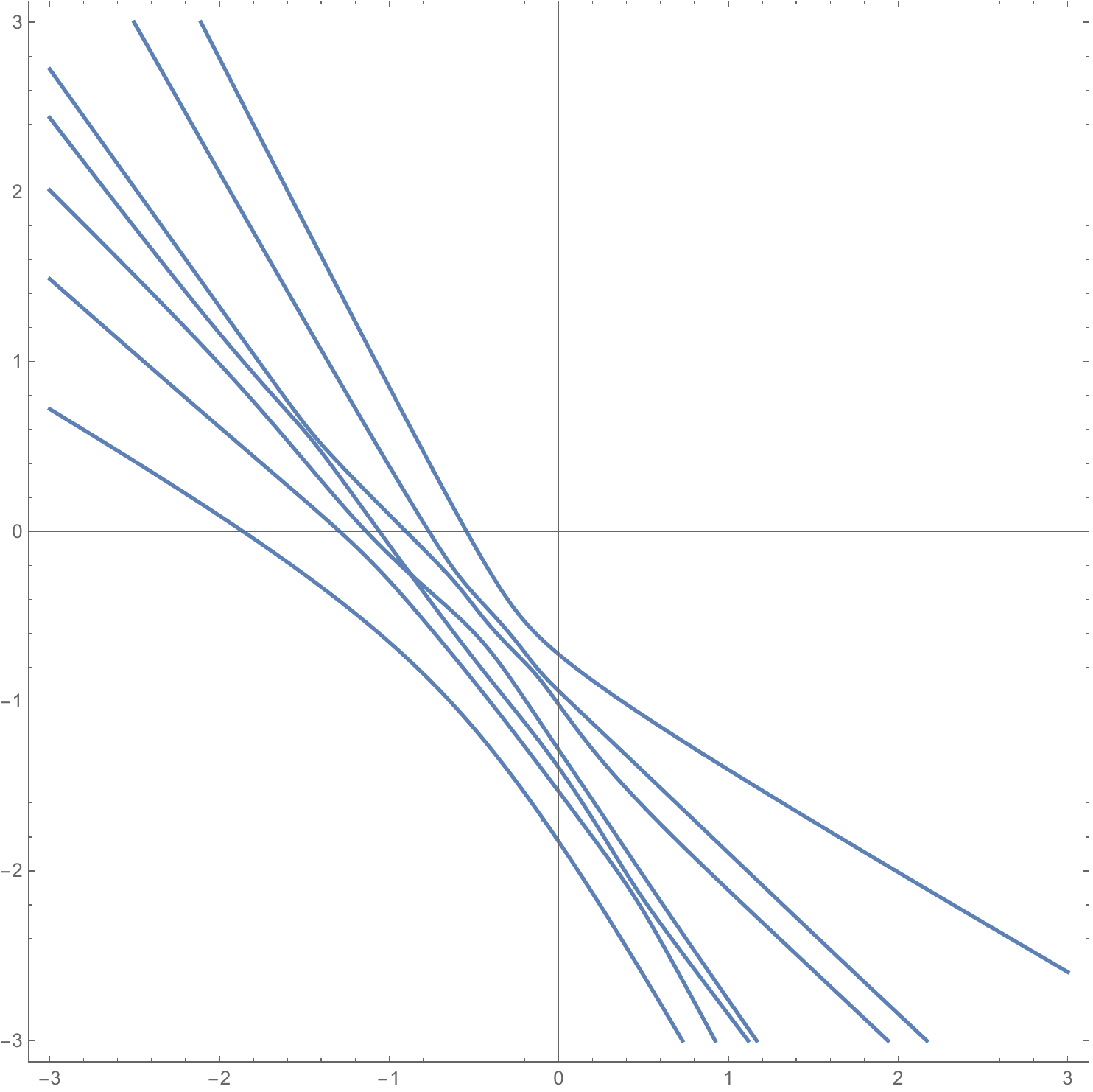}
\caption{\label{vinnikov}A Vinnikov curve of degree $n=7$ arising from the spanning tree $3$-determinantal measure on $K_8$ with randomly chosen conductances.}
\end{center}
\end{figure}
It has generically $n$ real components (which may touch for nongeneric parameter values),
and $n$ linear asymptotes whose slopes are the eigenvalues of $-C^{-1}B$; it intersects the $x$-axis at eigenvalues of $-C^{-1}A$
and the $y$-axis at eigenvalues of $-B^{-1}A$.

For larger $k$, we define a \emph{Vinnikov variety} to be the zero set of an expression
$$Q(x_1,\dots,x_k) = \det (x_1A_1+x_2A_2+\dots,x_kA_k)$$
where the $A_i$ are symmetric and positive semidefinite.
The zero set of the characteristic polynomial of a symmetric $k$-determinantal measure is a Vinnikov variety.

If $S\subset[n]$ is of size $|S|=n-1$ then by the interlacing property for eigenvalues of symmetric matrices,
the polynomial $P_S$ for the subdeterminantal measure 
has roots interlaced with those of $P$, as illustrated in figure \ref{interlaced}. 
\begin{figure}[htbp]
\begin{center}
\includegraphics[width=3in]{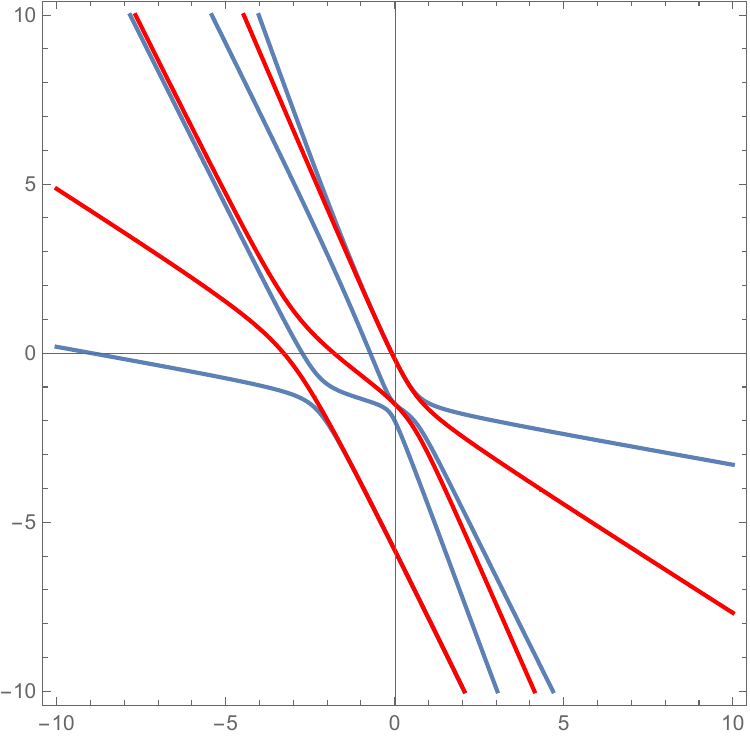}
\caption{\label{interlaced}The Vinnikov curve from a $3$-determinantal measure on $[3]^4$ (in blue) and an interlaced Vinnikov curve from the measure on $[3]^3$ in red.}
\end{center}
\end{figure}

\subsection{Commuting Symmetric $k$-determinantal measures}\label{commutingscn}

If in addition to being symmetric the matrices $A_1,\dots,A_k$ from a $k$-determinantal measure commute with each other,
then the characteristic polynomial $P$ factors into linear factors $P=\prod_{i=1}^k(a_{1i}x_1+a_{2i}x_2+\dots+a_{ki}x_k).$
We can conclude that the measure $\rho$, which counts the number of occurrences of each color, has the distribution of 
a sum of $n$ independent
$k$-sided die rolls, where the $i$th die is biased proportional to the $i$th eigenvalues of the $A_j$.

\section{Determinantal random permutations}\label{permscn}

Let $S_n$ be the permutation group on $[n]=\{1,2,\dots,n\}$.
An \emph{$S_n$-determinantal measure}
is an $n$-determinantal measure on $[n]^n$ in which each $A_i$ has rank $1$.
The rank-$1$ condition implies that each color occurs exactly once, so the measure is supported on 
permutations in $S_n$. 
See below for an example.

Since $A_i$ has rank $1$, we can write $A_i=u_iv_i^t$ where $u_i,v_i\in\R^n$. 
Since $\sum A_i=I$, the $u_i$ are a basis for $\R^n$. 
We have 
$$u_j=\sum_{i=1}^nA_iu_j=\sum_{i=1}^nu_i(v_i\cdot u_j)$$ 
which implies that $v_i\cdot u_j=\delta_{i,j}$.
If we let $U$ be the matrix whose columns are $u_1,\dots,u_n$, and $V$ the matrix whose rows are
$v_1^t,\dots,v_n^t$,
then $VU=I$, and we can conclude that
the measure $\mu$ is defined by the matrix $U$ (or $V$).

By (\ref{singleform}), the probability of a permutation $\sigma\in S_n$ is
\begin{align}
\nonumber\Pr_\mu(\sigma) &= \det((u_{\sigma(1)}v_{\sigma(1)}^1),\dots,(u_{\sigma(n)}v_{\sigma(n)}^n))\\
&=\label{probsig}(\det U)(-1)^\sigma v_{\sigma(1)}^1\dots v_{\sigma(n)}^n.\end{align}

Assume $\det U>0$ (which we can arrange by changing the signs of $u_1$ and $v_1$ if necessary).
The non-negativity of the probabilities puts strong constraints on the matrix $V$. 
If we write the usual expansion of the determinant of $V$:
$$\det V = \sum_{\sigma\in S_n}(-1)^\sigma v_{1\sigma(1)}\dots v_{n\sigma(n)},$$
each term in this expansion is nonnegative, since after scaling by $\det U$ it is equal,  by (\ref{probsig}), to the probability of permutation $\sigma^{-1}$.
By the result of \cite{VY}, $V$ must be a Kasteleyn matrix of a \emph{bipartite Pfaffian graph} (a bipartite graph which admits a Kasteleyn signing).

Conversely, every edge-weighted bipartite Pfaffian graph with $n$ white and $n$ black vertices
determines a determinantal measure on $S_n$: letting $V=K$ the Kasteleyn matrix, and $U=V^{-1}$,
the matrices $A_i$ are given by $A_i=u_iv_i^t$.

\begin{thm} \label{perm} Determinantal permutation measures are exactly those constructed as above 
from bipartite Pfaffian graphs with positive edge weights.
\end{thm}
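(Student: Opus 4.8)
The plan is to establish the two directions of the ``exactly'' claim, most of which the surrounding discussion has already assembled; the theorem is a packaging of the preceding paragraphs, so my job is to verify that the correspondence between $S_n$-determinantal measures and positively-weighted bipartite Pfaffian graphs is a genuine bijection-up-to-normalization and to pin down the one nontrivial implication precisely. First I would restate what an $S_n$-determinantal measure gives us: from the rank-$1$ decomposition $A_i = u_i v_i^t$, the relation $\sum_i A_i = I$, and the derived identities $VU = I$ and $\Pr_\mu(\sigma) = (\det U)(-1)^\sigma \prod_j v^j_{\sigma(j)}$ from equation (\ref{probsig}), I would emphasize that after fixing $\det U > 0$ the nonnegativity of every point probability is \emph{equivalent} to the statement that each signed term $(-1)^\sigma \prod_j v^j_{\sigma(j)}$ in the permutation expansion of $\det V$ is nonnegative.

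For the forward direction (every such measure comes from a Pfaffian graph) the key step is to recognize that ``all signed terms of $\det V$ have the same sign'' is exactly the combinatorial hypothesis of the theorem of \cite{VY}. I would define the bipartite graph $\G$ whose white/black vertices are indexed by rows and columns of $V$, with an edge $jk$ precisely when $V_{jk} \ne 0$ and edge weight $|V_{jk}|$; a permutation $\sigma$ with $\prod_j v^j_{\sigma(j)} \ne 0$ corresponds to a perfect matching, and the sign condition says the signs $(-1)^\sigma \prod_j \mathrm{sign}(V_{j\sigma(j)})$ agree across all matchings. By \cite{VY} this sign-consistency is precisely the condition that $\G$ admits a Kasteleyn signing, i.e. that $\G$ is Pfaffian and $V$ is (up to the already-fixed global sign) a Kasteleyn matrix with positive edge weights $|V_{jk}|$.

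For the converse I would take an edge-weighted bipartite Pfaffian graph with $n$ white and $n$ black vertices, let $V = K$ be a Kasteleyn matrix, set $U = V^{-1}$, and define $A_i = u_i v_i^t$ from the columns $u_i$ of $U$ and rows $v_i^t$ of $V$. The Kasteleyn property makes all signed terms of $\det V$ share a sign, so after normalizing $\det U>0$ all the quantities in (\ref{probsig}) are nonnegative; since $VU = I$ forces $\sum_i A_i = \sum_i u_i v_i^t = I$, equation (\ref{wedge}) guarantees the probabilities sum to $1$, and each $A_i$ has rank $1$ by construction. Hence the $A_i$ define a genuine $S_n$-determinantal measure, completing the bijection. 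I expect the main obstacle to be purely expository rather than mathematical: stating the hypotheses of \cite{VY} in a form that matches our setup, in particular checking that a positive weighting is achievable (absorbing $|V_{jk}|$ into the weights while the signs alone carry the Pfaffian data) and that the map $V \mapsto \G$ and its inverse are mutually consistent under the global-sign normalization we imposed on $U$. A secondary subtlety worth a sentence is that distinct Kasteleyn signings or global sign flips of $u_1,v_1$ yield the same measure, so the correspondence is a correspondence of measures, not of matrices.
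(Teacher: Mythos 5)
Your proposal is correct and follows essentially the same route as the paper: the paper's proof of Theorem \ref{perm} is precisely the discussion preceding it (the rank-$1$ decomposition $A_i=u_iv_i^t$, the identity $VU=I$, formula (\ref{probsig}), the appeal to \cite{VY} for the forward direction, and the inversion $U=K^{-1}$ for the converse), and you have simply made the two directions and the graph-from-matrix dictionary explicit. The only detail worth adding in either account is that the Pfaffian graph must admit at least one perfect matching so that $K$ is invertible.
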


Although the set of all bipartite Pfaffian graphs does not have a particularly simple description, it includes all bipartite planar graphs.
In \cite{RST} a structural description is given, and an algorithm is presented to determine whether or not
a graph is bipartite Pfaffian. The simplest nonplanar example is given by the Heawood graph, Figure \ref{heawood}.
Its Kasteleyn matrix when all edge weights are $1$ is just the bipartite adjacency matrix:
$$K=\begin{pmatrix}
1 & 0 & 1 & 0 & 0 & 0 & 1 \\
 1 & 1 & 0 & 1 & 0 & 0 & 0 \\
 0 & 1 & 1 & 0 & 1 & 0 & 0 \\
 0 & 0 & 1 & 1 & 0 & 1 & 0 \\
 0 & 0 & 0 & 1 & 1 & 0 & 1 \\
 1 & 0 & 0 & 0 & 1 & 1 & 0 \\
 0 & 1 & 0 & 0 & 0 & 1 & 1
 \end{pmatrix}.$$

The resulting determinantal measure on $S_7$ is uniform on its support, which is the set of the $24$ permutations in $S_7$ 
where for each $i$, 
$\sigma(i)\in\{i+1,i,i-2\}$ modulo $7$. For example $\sigma=1732645$.

\begin{figure}[htbp]
\begin{center}
\includegraphics[width=2in]{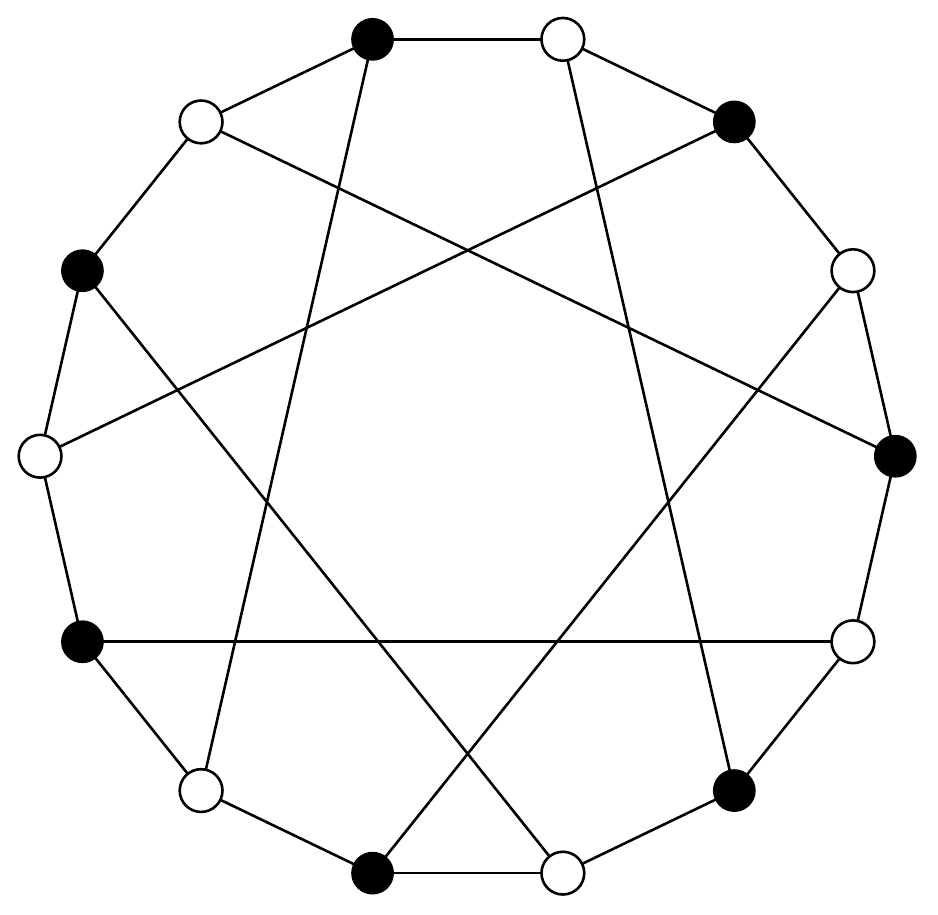}
\caption{\label{heawood}The Heawood graph.}
\end{center}
\end{figure}

\section{Projections and pure $k$-determinantal measures}\label{projscn}

Recall that a $k$-determinantal measure $\mu$ is \emph{pure} if the ranks of the $A_i$ add up to $n$.
In this case, by Lemma \ref{pureproj}, each $A_i$ is a projection matrix. Let $V_i$ be the subspace which is the image of $A_i$.
As shown in that lemma, $A_i$ is the projection to $V_i$ along the span of the other $V_j$'s. 

Let $n_i$ be the rank of $A_i$. 
Let $M$ be an $n\times n$ matrix whose first $n_1$ columns span $V_1$, next $n_2$ columns span $V_2$, and so on.
This matrix $M$ is well-defined up to the right action by $\GL_{n_1}(\R)\times\dots\times\GL_{n_k}(\R)$, which performs column operations on the first $n_1$
columns of $M$, the next $n_2$ columns of $M$, and so on. Note that $M$ has full rank.

We have 
\be\label{Adef}A_i = M\begin{pmatrix}0&0&0\\0&I_{n_i}&0\\0&0&0\end{pmatrix}M^{-1},\ee
where the central matrix has the $n_i\times n_i$ identity matrix $I_{n_i}$ occurring starting at the appropriate index $n_1+\dots+n_{i-1}+1$. 

Let $L=M^{-1}$. 
We partition $[n]$ into subsets $U_1,\dots U_k$, with
$$U_1=\{1,2,\dots,n_1\},~U_2=\{n_1+1,\dots,n_1+n_2\},\dots, U_k=\{n-n_k+1,\dots,n\}.$$
A point $\pi\in[k]^n$ in the support of $\mu$ is a permutation of $1^{n_1}2^{n_2}\dots k^{n_k}$, the sequence of $n_1$ `$1$'s, 
$n_2$ `$2$'s, etc. For each $j$ let $\pi(U_j)\subset[n]$ denote the locations of the indices $j$ in $\pi$.

\begin{thm}\label{minors} Single point probabilities for $\mu$ are products of minors of $L$, up to a multiplicative constant: we have
\be\label{prod}\Pr(\pi) = (-1)^\pi Q\prod_{i=1}^k L_{U_i}^{\pi(U_i)}\ee
where $Q=\det M$ and $(-1)^\pi$ is the signature of the mapping $\pi$, thought of as a permutation from $[n]$ to $[n]$. 
\end{thm}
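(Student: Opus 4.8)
The plan is to start from the single-point formula (\ref{singleform}) and substitute the conjugated form of the projections recorded in (\ref{Adef}). Write $c_j$ for the color that $\pi$ assigns to position $j$, so that $\Pr(\pi)$ is the determinant of the $n\times n$ matrix whose $j$th column is the $j$th column of $A_{c_j}$. Using $A_{c_j}=M E_{c_j} L$, where $E_{c_j}$ is the central block-identity matrix of (\ref{Adef}) and $L=M^{-1}$, the $j$th column equals $M\bigl(E_{c_j} L^{(j)}\bigr)$, with $L^{(j)}$ the $j$th column of $L$; the factor $E_{c_j}$ simply zeroes every entry of $L^{(j)}$ lying outside the row-block $U_{c_j}$. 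Factoring the common left factor $M$ out of all $n$ columns and using multiplicativity of the determinant gives $\Pr(\pi)=\det M\cdot\det N=Q\det N$, where $N$ is obtained from $L$ by zeroing, in each column $j$, all entries outside rows $U_{c_j}$.

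Next I would exploit the block structure of $N$. The columns of $N$ carrying color $i$ are precisely those indexed by the positions $\pi(U_i)$, and every such column is supported only on the rows $U_i$. Hence, if one permutes the columns of $N$ so that all color-$1$ columns come first (listed in increasing order of position), then all color-$2$ columns, and so on, the result is a block-diagonal matrix whose $i$th diagonal block is exactly the submatrix $L_{U_i}^{\pi(U_i)}$ with rows $U_i$ and columns $\pi(U_i)$. Its determinant is therefore $\prod_{i=1}^k L_{U_i}^{\pi(U_i)}$, so $\det N$ equals this product times the sign of the column permutation just performed.

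It then remains to identify that sign with $(-1)^\pi$. The column permutation sends each standard block index $m\in U_i$ to the corresponding position of color $i$ (in increasing order), which is exactly the bijection $[n]\to[n]$ meant by ``$\pi$ thought of as a permutation''; its signature is thus $(-1)^\pi$. Combining the three steps yields $\Pr(\pi)=(-1)^\pi Q\prod_{i=1}^k L_{U_i}^{\pi(U_i)}$, which is (\ref{prod}).

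The substitution and the factoring of $M$ are routine; the genuine content is the block-diagonalization after regrouping columns by color, which is what converts a single $n\times n$ determinant into a product of $k$ smaller minors. The step most prone to error is the sign bookkeeping: one must fix the order-preserving convention for listing the positions of each color within a block, and then check carefully that the signature of the resulting column permutation coincides with the declared signature $(-1)^\pi$ of $\pi$, rather than that of its inverse or a shifted variant.
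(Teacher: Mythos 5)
Your proof is correct and follows essentially the same route as the paper's: substitute $A_{c_j}=ME_{c_j}L$ into (\ref{singleform}), factor $M$ out of every column, and evaluate the remaining determinant in block form after regrouping columns by color, with the signature of that column permutation giving $(-1)^\pi$. The only difference is that you carry out the argument in full generality with explicit sign bookkeeping, whereas the paper presents it through a ``sufficiently general example.''
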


For example when $k=3$, and $(n_1,n_2,n_3)=(2,2,2)$ we have the point probability
\be\label{ex1}\Pr((2,1,1,3,2,3)) = -QL_{12}^{23}L_{34}^{15}L_{56}^{46}.\ee
Here $\pi:112233\to211323$ has corresponding permutation $123456\to231546$ and signature $-1$.

\begin{proof} The general pattern can be seen by working out a sufficiently general example such as (\ref{ex1}).
Using (\ref{singleform}), we have $\Pr((2,1,1,3,2,3))=$
\begin{align*}=&\det(A_2^1,A_1^2,A_1^3,A_3^4,A_2^5,A_3^6)\\
=&\det\left(\left[M\!\!\begin{pmatrix}
0 & 0 & 0 & 0 & 0 & 0 \\
 0 & 0 & 0 & 0 & 0 & 0 \\
 0 & 0 & 1 & 0 & 0 & 0 \\
 0 & 0 & 0 & 1 & 0 & 0 \\
 0 & 0 & 0 & 0 & 0 & 0 \\
 0 & 0 & 0 & 0 & 0 & 0 
 \end{pmatrix}L\right]^1,
 \dots,
 \left[M\!\!\begin{pmatrix}
0 & 0 & 0 & 0 & 0 & 0 \\
 0 & 0 & 0 & 0 & 0 & 0 \\
 0 & 0 & 0 & 0 & 0 & 0 \\
 0 & 0 & 0 & 0 & 0 & 0 \\
 0 & 0 & 0 & 0 & 1 & 0 \\
 0 & 0 & 0 & 0 & 0 &1 
 \end{pmatrix}\!\!L\right]^6\right).
\end{align*}
 We can factor the $M$'s out from the left.
What remains is
$$\det M\det\begin{pmatrix}
0&L_{12}&L_{13}&0&0&0\\
0&L_{22}&L_{23}&0&0&0\\
L_{31}&0&0&0&L_{35}&0\\
L_{41}&0&0&0&L_{45}&0\\
0&0&0&L_{54}&0&L_{56}\\
0&0&0&L_{64}&0&L_{66}\\
\end{pmatrix},$$
which can be evaluated in block form as $\pm Q L_{12}^{23}L_{34}^{15}L_{56}^{46}$.
The sign is the sign of the permutation of $112233\to211323$, acting on columns (in this case the sign is $-1$).
\end{proof}

The matrix $L$ is only defined up to the left action of $\GL_{n_1}(\R)\times\dots\times\GL_{n_k}(\R)$,
performing row operations on rows in each $U_i$. It is thus convenient to think of $L$ as a $k$-tuple of 
Grassmannian elements $L_i\in Gr_{n_i,n}$ where $L_i$ consists of the rows $U_i$ of $L$.

Theorem \ref{minors} then has the following corollary.
\begin{corollary}
Every pure $k$-determinantal process with $A_i$ of rank $n_i$ can be constructed from elements $L_i=L_{U_i}\in Gr_{n_i,n}$ (for $1\le i\le k$)
with the property that all products of Pl\"ucker coordinates (\ref{prod}) are nonnegative. 
\end{corollary}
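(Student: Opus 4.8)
The plan is to derive the corollary directly from Theorem~\ref{minors} by reversing the construction that produced the matrix $M$ (and its inverse $L$) from a pure $k$-determinantal measure. First I would observe that Theorem~\ref{minors} is a \emph{forward} statement: given a pure $k$-determinantal measure $\mu$, we chose an $M$ whose column blocks span the subspaces $V_i=\mathrm{im}(A_i)$, set $L=M^{-1}$, and obtained formula (\ref{prod}) expressing each single-point probability as a signed constant $Q=\det M$ times a product $\prod_i L_{U_i}^{\pi(U_i)}$ of maximal minors, one from each row-block $U_i$. The corollary is essentially the assertion that this data $(L_1,\dots,L_k)$, viewed as Grassmannian elements $L_i\in \mathrm{Gr}_{n_i,n}$, together with the sign constraint coming from positivity of probabilities, \emph{suffices} to reconstruct $\mu$.

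The key steps, in order, would be as follows. First, I would recall from the discussion preceding the corollary that $L$ is well-defined only up to the left action of $\GL_{n_1}(\R)\times\dots\times\GL_{n_k}(\R)$ performing row operations within each block $U_i$, so that the intrinsic object is precisely the $k$-tuple of subspaces $L_i=L_{U_i}\in\mathrm{Gr}_{n_i,n}$ spanned by the rows in block $U_i$; the individual minors $L_{U_i}^{S}$ are then the Pl\"ucker coordinates of $L_i$ (well-defined up to a common positive scalar per block once representatives are fixed). Second, I would note that because every single-point probability $\Pr(\pi)$ is nonnegative and equals $(-1)^\pi Q\prod_i L_{U_i}^{\pi(U_i)}$ by (\ref{prod}), and because $Q=\det M\ne 0$ (as $M$ has full rank), each product of Pl\"ucker coordinates appearing in (\ref{prod}) must have sign $(-1)^\pi\,\mathrm{sgn}(Q)$; after normalizing representatives so that $Q>0$, this is exactly the stated sign restriction that all the products (\ref{prod}) are nonnegative. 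Third, for the converse direction implicit in the word ``constructed,'' I would start from any $k$-tuple $L_i\in\mathrm{Gr}_{n_i,n}$ satisfying the sign condition, assemble an $n\times n$ matrix $L$ by stacking representatives of the $L_i$ in blocks, set $M=L^{-1}$ and define $A_i$ by (\ref{Adef}); Lemma~\ref{pureproj} (via the projection formula) guarantees each $A_i$ is the projection onto its block, the $A_i$ sum to $I$, and Theorem~\ref{minors} then recovers (\ref{prod}), whose nonnegativity is precisely our hypothesis, so the $A_i$ define a genuine pure $k$-determinantal measure.

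The main obstacle, and the only genuinely substantive point, is verifying the equivalence between the coordinate-dependent statement ``all products (\ref{prod}) are nonnegative'' and a coordinate-free condition on the $L_i$: the individual minors $L_{U_i}^{S}$ are not invariants of the Grassmannian points $L_i$, since a block $\GL_{n_i}$-change of basis rescales all Pl\"ucker coordinates of $L_i$ by a common factor $\det(g_i)$ which may be negative. I would address this by checking that under such a basis change each product $\prod_i L_{U_i}^{\pi(U_i)}$ is multiplied by the single global factor $\prod_i\det(g_i)$, \emph{independent of $\pi$}, while $Q=\det M$ picks up the compensating factor $\prod_i\det(g_i)^{-1}$ from $M=L^{-1}$; hence the products $(-1)^\pi Q\prod_i L_{U_i}^{\pi(U_i)}$ are invariant, so the nonnegativity condition is well-posed on the Grassmannian data. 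The remaining verifications---that $M=L^{-1}$ exists (full rank, equivalent to the support being nonempty, i.e. at least one product nonzero) and that the reconstructed $A_i$ reproduce the correct measure---are routine given Lemma~\ref{pureproj} and Theorem~\ref{minors}, so I would state them briefly and conclude.
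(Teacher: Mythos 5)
Your proposal is correct and follows the same route as the paper, which states this corollary as an immediate consequence of Theorem~\ref{minors} without a separate proof: the forward direction is just nonnegativity of the probabilities in (\ref{prod}), and the converse is the reconstruction $M=L^{-1}$, $A_i$ via (\ref{Adef}). Your additional check that the sign condition is well-posed under the block $\GL_{n_1}(\R)\times\dots\times\GL_{n_k}(\R)$ action (the factors $\prod_i\det(g_i)$ from the minors cancelling against $\det M$) is a worthwhile detail the paper leaves implicit.
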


A simpler statement for $k=2$ is given below in Section \ref{mnevsection}.

\subsection{Supports of pure $k$-determinantal measures, $k\ge3$}

Theorem \ref{minors} implies that, for $k\ge 3$, the supports of pure $k$-determinantal measures have certain restrictions:
they cannot be supported on all $\binom{n}{n_1,\dots,n_k}$ sequences.
Take any three distinct indices $i_1,i_2,i_3\in[n].$ Let $x\in[k]^n$ have $3$ distinct color values at $i_1,i_2,i_3$, for example
$x_{i_1}=1,x_{i_2}=2,x_{i_3}=3$. Consider the other two points $x',x''$ obtained by cylically permuting the
values at these three indices, that is, $x',x''=x$ at all indices except $i_1,i_2,i_3$ and $x'_{i_1}=2,x'_{i_2}=3,x'_{i_3}=1$,
and $x''_{i_1}=3,x''_{i_2}=1,x''_{i_3}=2$.
Then we claim that $\Pr(x),\Pr(x'),\Pr(x'')$ cannot all be positive. 
To see this, let $y,y',y''$ be the other three points agreeing with $x$ off of $i_1,i_2,i_3$ (and thus having the other three permutations of $1,2,3$ at $i_1,i_2,i_3$).
By (\ref{prod}), 
$$\Pr(x)\Pr(x')\Pr(x'')=-\Pr(y)\Pr(y')\Pr(y'')$$ (both sides have the same $L$ minors but differing signs), a contradiction to the positivity of either side, so both sides must be zero.

Can this fact be used to give a different proof of Theorem \ref{perm}?

\subsection{Characterization of pure $2$-determinantal measures}\label{mnevsection}

Contrary to the previous section, one can construct pure $2$-determinantal measures with full support. 
We give a construction arising from a pair of matrices in the Grassmannian, with Pl\"ucker coordinates of the same sign.
This construction is general in the sense that it gives all pure $2$-determinantal measures.

Let $n_1+n_2=n$, and
let $G_1,G_2\in Gr_{n_1,n}$ be two elements whose corresponding Pl\"ucker coordinates have the same signs.
(We assume we are in the generic situation where all Pl\"ucker coordinates are nonzero; the nongeneric situation can be obtained
from a limit of generic cases.)
Choose representative $n_1\times n$ matrices $(I_{n_1}~A)$ for $G_1$ and $(I_{n_1}~B)$ for $G_2$.
Let
\be\label{L}L=\begin{pmatrix}I_{n_1}&A\\-B^t&I_{n_2}\end{pmatrix}.\ee 

Then we claim that $A_1,A_2$ defined by (\ref{Adef}) (with $L=M^{-1}$) form a $2$-determinantal measure.
To show this, from Theorem \ref{minors} we need to show that for all $\pi\in [2]^n$
\be\label{pp}\Pr(\pi) = (-1)^\pi QL_{U_1}^{\pi(U_1)}L_{U_2}^{\pi(U_2)}>0,\ee
where $L_{U_1}=(I_{n_1}~A)$ consists of the first $n_1$ rows of $L$ and $L_{U_2}=(-B^t~I_{n_2})$ consists of the last $n_2$ rows. 

We have
$$Q=\det M =(\det L)^{-1} = (\det(I+AB^t))^{-1}>0$$ by Lemma \ref{detpos} below.
It suffices then to prove that the sign of
$L_{U_1}^{\pi(U_1)}L_{U_2}^{\pi(U_2)}$ is $(-1)^\pi$. 

The signature $(-1)^\pi$ can be computed as the parity of the total displacement of $1$s, or equivalently 
the total displacement of $2$s: 
$(-1)^\pi=(-1)^s$ where $s=\sum_{i\in U_1}\pi(i)-i=\sum_{j\in U_2}j-\pi(j)$, since each term in the sum
is the number of $12\to21$ crossings each index makes. 

By Lemma \ref{compl} below, maximal minors of $L_{U_2}=(-B^t~I_{n_2})$ are, up to sign, equal to the complementary maximal minors of $S:=(I_{n_1}~B)$, with sign change given by the sum of ``index displacements'':
that is, for a subset $J\subset[n]$ of size $n_2$ we have 
$$(L_{U_2})^J=(-1)^JS^{J^c}$$
where $(-1)^J=(-1)^{(j_1-1)+\dots+(j_{n_2}-n_2)}.$ If $J=\pi(U_2)$ this sign is exactly $(-1)^\pi$.

By hypothesis, the sign of $S^{J^c}$ equals that of $(L_{U_1})^{J^c}$, so setting
$J=\pi(U_2)$, the signs cancel in (\ref{pp}).
This shows that (\ref{pp}) is positive, and completes the construction  of a pure $2$-determinantal measure.
\bigskip

It is unclear how to explicitly parameterize all such pairs $G_1,G_2$ (and thereby parameterize pure $2$-determinant processes). 
In fact if we include $0$ as a potential sign (that is, restrict some minors to be zero), 
the Mn\"ev Universality Theorem \cite{mnev} says that subsets of 
$\Gr_{n_1,n}$ having Pl\"ucker coordinates of predetermined signs in $\{-1,0,1\}$ can be arbitrarily complicated,
essentially equivalent to any semialgebraic set. So in this sense there seems to be little hope for a reasonable parameterization of such pairs $G_1,G_2$. 

\section{Appendix: Linear Algebra}

\begin{lemma} \label{detpos}
If $n_1\times n$ matrices $(I_{n_1}~A)$ and $(I_{n_1}~B)$ have corresponding Pl\"ucker coordinates of the same signs (with some pair nonzero) then $\det(I+AB^t)>0$.
\end{lemma}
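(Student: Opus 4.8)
The plan is to express $\det(I_{n_1}+AB^t)$ as the determinant of a product of two rectangular matrices and to expand it by the Cauchy--Binet formula, which will turn it into a sum of products of Pl\"ucker coordinates of $G_1$ and $G_2$. The key algebraic observation is
$$I_{n_1}+AB^t=(I_{n_1}~A)\,(I_{n_1}~B)^t,$$
since the product of the $n_1\times n$ matrix $(I_{n_1}~A)$ with the $n\times n_1$ matrix $(I_{n_1}~B)^t$ is exactly $I_{n_1}I_{n_1}^t+AB^t$.

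Next I would apply Cauchy--Binet to this product of an $n_1\times n$ matrix and an $n\times n_1$ matrix. Writing $(I_{n_1}~A)^S$ for the maximal minor on the column set $S\subseteq[n]$ with $|S|=n_1$, this yields
$$\det(I_{n_1}+AB^t)=\sum_{|S|=n_1}(I_{n_1}~A)^S\,(I_{n_1}~B)^S.$$
To read off the second factor correctly, one checks that the minor of the transpose $(I_{n_1}~B)^t$ on the rows indexed by $S$ equals the maximal minor $(I_{n_1}~B)^S$ of $(I_{n_1}~B)$ on the columns indexed by $S$; this holds because a square matrix and its transpose have equal determinants and the index set $S$ is the same on both factors, so no extra sign appears. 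The quantities $(I_{n_1}~A)^S$ and $(I_{n_1}~B)^S$ are precisely the Pl\"ucker coordinates of $G_1$ and $G_2$ at the index set $S$.

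To finish, the hypothesis that corresponding Pl\"ucker coordinates of $G_1$ and $G_2$ have the same sign makes each summand $(I_{n_1}~A)^S(I_{n_1}~B)^S$ nonnegative, so the sum is $\ge 0$. Strict positivity then follows from the single term $S=\{1,\dots,n_1\}$: both matrices restrict to $I_{n_1}$ on these columns, so each of the two minors equals $1$ there, contributing $1$ to the sum. (This also shows the ``some pair nonzero'' requirement is automatic for the normalized representatives $(I_{n_1}~A)$ and $(I_{n_1}~B)$, since the leading minor is always $1$.)

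I do not expect a serious obstacle here. The only point requiring care is the bookkeeping in the Cauchy--Binet step, namely verifying that the factor arising from $(I_{n_1}~B)^t$ is the Pl\"ucker coordinate $(I_{n_1}~B)^S$ with the correct (trivial) sign, so that the sign of each summand is governed entirely by the sign-matching hypothesis and no spurious permutation signs intervene.
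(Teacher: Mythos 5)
Your proof is correct, and it reaches the same destination as the paper (a Cauchy--Binet expansion of $\det(I+AB^t)$ into a sum of products of corresponding Pl\"ucker coordinates, each nonnegative by hypothesis) via a slightly different and arguably cleaner decomposition. The paper first expands $\det(I+AB^t)$ over principal minors of $AB^t$ and then applies Cauchy--Binet to each one, obtaining a double sum over terms $A_S^T(B^t)_T^S$; these minors equal the Pl\"ucker coordinates of $(I_{n_1}\,A)$ and $(I_{n_1}\,B)$ only up to a permutation sign coming from the interleaved identity columns, a sign that cancels between the two factors but which the paper's proof does not spell out. Your factorization $I_{n_1}+AB^t=(I_{n_1}\,A)(I_{n_1}\,B)^t$ followed by a single application of Cauchy--Binet produces the maximal minors $(I_{n_1}\,A)^S(I_{n_1}\,B)^S$ directly, so no spurious signs arise, and your observation that the term $S=[n_1]$ contributes exactly $1$ gives strict positivity explicitly (the paper relies on the analogous $S=\emptyset$ term of its principal-minor expansion for this, again implicitly). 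The two arguments are term-by-term equivalent after reindexing, but yours is the tidier bookkeeping.
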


\begin{proof} We have
$$\det(I+AB^t) = \sum_{S\subset[n]}\det((AB^t)_S^S) = \sum_{S\subset[n]}\sum_{\substack{T\subset[n_1]\\|T|=|S|}}A_S^T(B^t)_T^S$$
where the last equality is the Cauchy-Binet Theorem. Each minor $A_S^T$ is a Pl\"ucker coordinate of $(I_{n_1}~A)$ and
$(B^t)_T^S$ is the corresponding Pl\"ucker coordinate of $(I_{n_1}~B)$. These have the same sign by hypothesis 
so every term in the double sum is nonnegative.
\end{proof}

\begin{lemma}\label{compl} With $n=n_1+n_2$ let $S=(I_{n_1}~B)\in Gr_{n_1,n}$ and $R=(-B^t~I_{n_2})\in Gr_{n_2,n}$. 
For a set $J\subset[n]$ of size $n_1$ we have the identity of Pl\"ucker coordinates $S^J=(-1)^JR^{J^c}$
where $(-1)^J=(-1)^{(j_1-1)+\dots+(j_{n_1}-n_1)}.$ 
\end{lemma}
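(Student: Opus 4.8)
The plan is to reduce both Plücker coordinates $S^J$ and $R^{J^c}$ to one and the same minor of $B$ by Laplace expansion along the identity blocks, and then to reconcile the two signs that are produced. Write $P=J\cap[n_1]$ for the part of $J$ lying in the first block and $Q\subset[n_2]$ for the part of $J$ lying in the second block, shifted down by $n_1$; set $p=|P|$, so that $|Q|=n_1-p$. In the $n_1\times n_1$ submatrix of $S=(I_{n_1}~B)$ on columns $J$, the columns indexed by $P$ are the standard basis vectors $e_j$ ($j\in P$), while the columns indexed by $Q$ are columns of $B$. Expanding along the $p$ standard-basis columns forces the rows $P$ and leaves
$$S^J = (-1)^{\alpha}\,\det\bigl(B[\,[n_1]\setminus P,\;Q\,]\bigr)$$
for an explicit Laplace sign $(-1)^{\alpha}$.

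Next I would treat $R^{J^c}$ the same way. Here $J^c\cap[n_1]=[n_1]\setminus P$ indexes columns of $-B^t$, and the second-block part of $J^c$ is $[n_2]\setminus Q$, indexing standard basis vectors $e_q$. Expanding along those $n_2-n_1+p$ identity columns forces the rows $[n_2]\setminus Q$ and leaves the minor of $-B^t$ on rows $Q$ and columns $[n_1]\setminus P$; since transposition preserves determinants and each entry carries a minus sign,
$$R^{J^c}=(-1)^{\beta}\,(-1)^{\,n_1-p}\,\det\bigl(B[\,[n_1]\setminus P,\;Q\,]\bigr).$$
Both sides thus equal the common minor $\det\bigl(B[\,[n_1]\setminus P,Q\,]\bigr)$ up to sign, so (in the generic case where this minor is nonzero) the identity $S^J=(-1)^J R^{J^c}$ is equivalent to the purely combinatorial statement $(-1)^{\alpha}=(-1)^{\,\beta+n_1-p}(-1)^J$.

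The main obstacle is exactly this sign bookkeeping: one must write $\alpha$ and $\beta$ explicitly in terms of the positions of the elements of $P$, $Q$, and their complements, and check that they combine to $(-1)^J=(-1)^{(j_1-1)+\dots+(j_{n_1}-n_1)}$. It helps to record that this target sign is precisely the signature $\epsilon(J,J^c)$ of the shuffle permutation listing $J$ in increasing order followed by $J^c$ in increasing order, which organizes the count as a tally of pairs $a\in J$, $b\in J^c$ with $a>b$. A clean cross-check — and an alternative route to the whole lemma — comes from the structural observation that $SR^t=-B+B=0$, so the row space of $R$ is the orthogonal complement of the row space of $S$; by the duality of complementary Plücker coordinates this forces $S^J=c\,\epsilon(J,J^c)\,R^{J^c}$ for a single global constant $c$, and evaluating at $J=[n_1]$ (where $S^{[n_1]}=\det I_{n_1}=1$, $R^{J^c}=\det I_{n_2}=1$, and $\epsilon=+1$) pins $c=1$. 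Either the direct sign count or this complement-duality argument completes the proof, with the nongeneric cases following by continuity.
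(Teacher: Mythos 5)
Your first route is the paper's own: both $S^J$ and $R^{J^c}$ are reduced by expansion along the identity columns to the common minor $B_{\bar J_1}^{J_2}$ (your $B[\,[n_1]\setminus P,\,Q\,]$), leaving a sign identity to verify. But you stop at exactly the point where the work is: the ``sign bookkeeping'' you flag as the main obstacle is the entire content of the lemma, so route one as written is incomplete. The paper closes that gap without ever writing your $\alpha$ and $\beta$ explicitly, by a small deformation argument: moving one element of $J$ to an adjacent index flips both $(-1)^J$ and the ratio of the two minors, so it suffices to check the single base case $J=[n_1]$, where both Pl\"ucker coordinates and $(-1)^J$ equal $1$. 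Your second route, however, is a complete proof and genuinely different from the paper's: $SR^t=-B+B=0$ shows the row spaces of $S$ and $R$ are orthogonal complements (dimensions add to $n$ and both matrices have full rank because of the identity blocks), the duality of complementary Pl\"ucker coordinates of orthogonal subspaces gives $S^J=c\,\epsilon(J,J^c)\,R^{J^c}$ for a single constant $c$, your inversion count $\sum_i(j_i-i)$ correctly identifies $\epsilon(J,J^c)$ with $(-1)^J$, and the same base case $J=[n_1]$ pins $c=1$; no genericity or continuity is even needed here, since the duality holds identically in $B$. This is arguably cleaner than the paper's computation --- it explains why the complementary-minor identity holds rather than verifying it entry by entry --- at the cost of importing the Hodge-star duality as a known fact, which is essentially the lemma in coordinate-free form and would need a citation or its own short proof to make the argument self-contained.
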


\begin{proof} Let $J=J_1\cup J_2$ where $J_1=J\cap[n_1]$ be the part of $J$ within the first $n_1$ indices, and 
$J_2=J\cap\{n_1+1,\dots,n\}$. Let $\bar{J}_1=[n_1]\setminus J_1$ and $\bar{J}_2=\{n_1+1,\dots,n\}\setminus J_2$.
Then $S^J=(-1)^XB_{\bar{J}_1}^{J_2}$ where $(-1)^X$ is the sign of the permutation of rows moving rows $J_1$ to the
first $|J_1|$ rows.
Also $R^{J^c}= (-1)^Y(-B^t)_{J_2}^{\bar{J}_1}= (-1)^{Y+|J_2|}B_{\bar{J}_1}^{J_2}$
where $(-1)^Y$ represents the signs of the permutation switching rows of $R$ to make rows corresponding to $\bar J_2$ 
 the last $|\bar J_2|$ rows, that is, so that $R^{J^c}$ is transformed into block form 
 $$\begin{vmatrix}(-B^t)_{J_2}^{\bar{J}_1}&0\\C&I_{|\bar J_2|}\end{vmatrix}.$$
 
To check that the signs are correct as stated, note that changing one element of $J$ to a consecutive index changes the sign,
in both $(-1)^J$ and the ratio of signs of the two minors. 
So we just need to check signs for a single choice of $J$. An easy case is when $J=[n_1]$, for which $S^J=1=R^{J^c}$and $(-1)^J=1$.
\end{proof}

\bibliographystyle{plain}
\bibliography{rgb}

\begin{thebibliography}{10}

\bibitem{Borodin}
Alexei Borodin.
\newblock Determinantal point processes.
\newblock In {\em The {O}xford handbook of random matrix theory}, pages
  231--249. Oxford Univ. Press, Oxford, 2011.

\bibitem{BDF}
Alexei Borodin, Persi Diaconis, and Jason Fulman.
\newblock On adding a list of numbers (and other one-dependent determinantal
  processes).
\newblock {\em Bull. Amer. Math. Soc. (N.S.)}, 47(4):639--670, 2010.

\bibitem{BP}
Robert Burton and Robin Pemantle.
\newblock Local characteristics, entropy and limit theorems for spanning trees
  and domino tilings via transfer-impedances.
\newblock {\em Ann. Probab.}, 21(3):1329--1371, 1993.

\bibitem{Chaiken}
Seth Chaiken.
\newblock A combinatorial proof of the all minors matrix tree theorem.
\newblock {\em SIAM Journal on Algebraic Discrete Methods}, 3(3):319--329,
  1982.

\bibitem{Kast}
Pieter~W Kasteleyn.
\newblock Dimer statistics and phase transitions.
\newblock {\em Journal of Mathematical Physics}, 4(2):287--293, 1963.

\bibitem{Kenyon.local}
Richard Kenyon.
\newblock Local statistics of lattice dimers.
\newblock {\em Ann. Inst. H. Poincar\'e{} Probab. Statist.}, 33(5):591--618,
  1997.

\bibitem{Kenyon.lectures}
Richard Kenyon.
\newblock Lectures on dimers.
\newblock In {\em Statistical mechanics}, volume~16 of {\em IAS/Park City Math.
  Ser.}, pages 191--230. Amer. Math. Soc., Providence, RI, 2009.

\bibitem{Kirchhoff}
Gustav Kirchhoff.
\newblock \"{U}ber die {A}ufl{\"o}sung der {G}leichungen, auf welche man bei
  der {U}ntersuchung der linearen {V}erteilung galvanischer {S}tr{\"o}me
  gef{\"u}hrt wird.
\newblock {\em Ann. Phys.}, 148(12):497--508, 1847.

\bibitem{Lyons}
Russell Lyons.
\newblock Determinantal probability measures.
\newblock {\em Publ. Math. Inst. Hautes \'Etudes Sci.}, (98):167--212, 2003.

\bibitem{Macchi}
Odile Macchi.
\newblock The coincidence approach to stochastic point processes.
\newblock {\em Advances in Appl. Probability}, 7:83--122, 1975.

\bibitem{mnev}
N.~E. Mnev.
\newblock The universality theorems on the classification problem of
  configuration varieties and convex polytopes varieties.
\newblock In {\em Topology and geometry---{R}ohlin {S}eminar}, volume 1346 of
  {\em Lecture Notes in Math.}, pages 527--543. Springer, Berlin, 1988.

\bibitem{PV}
Yuval Peres and B\'alint Vir\'ag.
\newblock Zeros of the i.i.d.\ {G}aussian power series: a conformally invariant
  determinantal process.
\newblock {\em Acta Math.}, 194(1):1--35, 2005.

\bibitem{postnikov_06}
Alexander Postnikov.
\newblock Total positivity, grassmannians, and networks.
\newblock {\em arXiv preprint math/0609764}, 2006.

\bibitem{RST}
Neil Robertson, P.~D. Seymour, and Robin Thomas.
\newblock Permanents, {P}faffian orientations, and even directed circuits.
\newblock {\em Ann. of Math. (2)}, 150(3):929--975, 1999.

\bibitem{Soshnikov}
A.~Soshnikov.
\newblock Determinantal random point fields.
\newblock {\em Uspekhi Mat. Nauk}, 55(5(335)):107--160, 2000.

\bibitem{VY}
Vijay~V. Vazirani and Milhalis Yannakakis.
\newblock Pfaffian orientations, {$0$}-{$1$} permanents, and even cycles in
  directed graphs.
\newblock volume~25, pages 179--190. 1989.
\newblock Combinatorics and complexity (Chicago, IL, 1987).

\bibitem{Vinnikov}
Victor Vinnikov.
\newblock Complete description of determinantal representations of smooth
  irreducible curves.
\newblock {\em Linear Algebra Appl.}, 125:103--140, 1989.

\bibitem{Wilson}
David~Bruce Wilson.
\newblock Generating random spanning trees more quickly than the cover time.
\newblock In {\em Proceedings of the {T}wenty-eighth {A}nnual {ACM} {S}ymposium
  on the {T}heory of {C}omputing ({P}hiladelphia, {PA}, 1996)}, pages 296--303.
  ACM, New York, 1996.

\end{thebibliography}
\end{document}